\documentclass[12pt,a4paper]{article}

\usepackage[margin=3.3cm]{geometry}
\usepackage[T1]{fontenc}
\usepackage[utf8]{inputenc}
\usepackage{lmodern}
\usepackage{amsmath,amssymb,amsthm,mathtools}
\usepackage{graphicx}
\usepackage{tikz}
\usetikzlibrary{arrows.meta,calc}
\usepackage{float}
\usepackage{enumitem}
\usepackage{xcolor}
\usepackage[hidelinks]{hyperref}
\usepackage[nameinlink,noabbrev]{cleveref}
\usepackage{microtype}

\allowdisplaybreaks

\newtheorem{theorem}{Theorem}[section]
\newtheorem{proposition}[theorem]{Proposition}
\newtheorem{corollary}[theorem]{Corollary}
\newtheorem{assumption}[theorem]{Assumption}
\theoremstyle{definition}
\newtheorem{definition}[theorem]{Definition}
\theoremstyle{remark}
\newtheorem{remark}[theorem]{Remark}

\newcommand{\E}{\mathbb{E}}
\newcommand{\Prob}{\mathbb{P}}
\newcommand{\R}{\mathbb{R}}

\newcommand{\Var}{\operatorname{Var}}
\newcommand{\Unif}{\mathrm{Unif}}

\title{\textbf{Chart-Generated Probability Geometry\\and Kolmogorov Expectations}}
\author{Manuela-Simona Cojocea\\[0.35em]
\small Faculty of Mathematics and Computer Science, University of Bucharest\\
\small \texttt{simona.cojocea@fmi.unibuc.ro}}
\date{August 5, 2026}

\begin{document}
\maketitle

\begin{abstract}
This paper develops a geometric reinterpretation of probability in which a
cumulative distribution function is not used merely as a passive coordinate
label, but as a generator of metric geometry. An admissible probability chart
$G:I\to(0,1)$ pulls the Euclidean distance of the probability interval back to
value space through
\[
d_G(x,y)=|G(x)-G(y)|.
\]
This creates a decisive distinction from ordinary coordinate invariance: if
$G$ is replaced by another chart $H$ while the observations and their law are
kept fixed, the induced metric-measure structure changes. Although the naked
metric spaces $(I,d_G)$ and $(I,d_H)$ are abstractly isometric, the fixed data
experience different spacing. Probability charts therefore preserve ordinal
structure while altering metric notions such as distance, dispersion,
boundary proximity, and barycentric centrality.

Averaging linearly in the chart coordinate and pulling the result back defines
\[
b_G(X)=G^{-1}\!\bigl(\mathbb E[G(X)]\bigr),
\]
the Fr\'echet barycenter of $X$ under $d_G$. These functionals coincide with
Kolmogorov--Nagumo means, but the chart-generated viewpoint explains their
chart dependence geometrically: changing the chart changes the loss being
minimized, rather than merely rewriting one fixed centre in new coordinates.
A rigidity result shows that, within normalized probability charts, preserving
the barycenter for every law forces the chart itself to remain unchanged.

The framework yields chart-induced barycenters beyond classical integrability
assumptions. Under the intrinsic chart the coordinates are uniform and the
barycenter is the median; under an external benchmark chart, tail mismatch is
represented by excess boundary occupation. Laws of large numbers, central
limit theorems, and a non-asymptotic concentration inequality are established
for the transformed functionals. Probability-coordinate moments exist for
every law supported on the chart domain and determine the law uniquely, in
contrast with classical moment non-existence and indeterminacy on unbounded
value spaces. The resulting perspective treats chart selection as an explicit
choice of probabilistic ruler and expectation as a geometry-dependent
barycentric operation.
\end{abstract}

\noindent\textbf{Keywords:} chart-generated geometry; probability coordinates; metric-measure geometry; Kolmogorov--Nagumo means; heavy tails; central tendency.

\medskip
\noindent\textbf{Mathematics Subject Classification (2020):} 60B05; 60E05; 60F05.

\bigskip
\section{Introduction}

Expectation is traditionally defined as an average taken in value space.
This viewpoint is so standard that it is rarely questioned, even in settings
where classical assumptions fail. In particular, for heavy-tailed
distributions, the expected value may not exist, or may fail to capture a
meaningful notion of central behaviour.

The starting point of this paper is that a coordinate-induced averaging
functional depends on the chosen probability chart; classical expectation
itself is not changed. Instead of averaging directly in value space, one may
first transport the random variable through a cumulative distribution
function, average on the probability scale, and pull the result back to value
space. This defines a different, chart-indexed functional.

At this point a fundamental geometric distinction becomes decisive. In the
usual differential-geometric interpretation, a change of coordinates is
\emph{passive}: it changes the numerical description of a fixed geometric
object, while distances, geodesics, and barycenters remain the same once the
metric tensor is transformed correctly. Probability charts play a stronger
role here. Each admissible chart $G:I\to(0,1)$ is used both as a coordinate map
and as a generator of geometry by pulling the Euclidean metric of $(0,1)$ back
to $I$:
\[
d_G(x,y)=|G(x)-G(y)|.
\]
Replacing $G$ by another chart $H$ while keeping the points of $I$, the random
variable $X$, and its law fixed does not merely re-express the same metric. It
assigns the Euclidean metric anew in the $H$-coordinate and therefore produces
a different metric $d_H$. In concise terms, chart replacement is a coordinate change followed by the
reimposition of the Euclidean metric.

This distinction identifies the mathematical content of chart selection. A
probability chart is a \emph{probabilistic ruler}: for $x<y$, the distance
$G(y)-G(x)$ is the amount of reference probability mass placed between the two
points by the distribution inducing $G$. Different charts preserve the order
of observations but alter their spacing. Consequently, ranks and other ordinal
statements remain invariant, whereas distances, midpoints, dispersions,
boundary proximity, and barycentric centres may change. The relevant object is
therefore not merely the abstract metric space $(I,d_G)$, which is always
isometric to $(0,1)$, but the fixed-data metric-measure structure
\[
(I,d_G,P_X).
\]
The data remain in place while the geometry through which they are read is
changed.

This leads to quantities of the form
\[
M_g(X)=g^{-1}\bigl(\mathbb E[g(X)]\bigr),
\]
which coincide with the class of generalized means introduced by
Kolmogorov~\cite{Kolmogorov1930}, Nagumo~\cite{Nagumo1930}, de
Finetti~\cite{deFinetti1931}, and Chisini~\cite{Chisini1929}. These
constructions reappear under several names, including Kolmogorov means,
Kolmogorov--Nagumo means, quasi-arithmetic means, $f$-means, generalized means,
and Chisini means. The diversity of terminology reflects historical
development rather than structural differences.

In their original formulation, these means arise as functional solutions to
axiomatic or algebraic characterizations of averaging procedures. The
perspective adopted here is different. Rather than viewing them as abstract
generalizations of the arithmetic mean, we interpret them as Fr\'echet
barycenters for chart-generated probability metrics. What appears algebraically
as a transformed mean is the result of performing an ordinary Euclidean average
on the probability scale and then reading it back through the inverse chart.
The generalized mean is therefore not simply an alternative aggregation rule:
it is the centre selected by a particular probabilistic geometry.

This perspective has also been revisited in a modern statistical context.
De Carvalho~\cite{deCarvalho2016} extends generalized means from finite
collections to distributions, interpreting
\[
b_g(X)=g^{-1}\bigl(\mathbb E[g(X)]\bigr)
\]
as a population-level functional. The present work builds on this observation
but shifts the emphasis from functional extension to geometric structure. The
central question is not only whether the functional can be defined, but what
geometry makes it the natural barycenter.

Recent work has explored geometric interpretations of quasi-arithmetic means
through information geometry and Bregman manifolds; see
\cite{Nielsen2023QAC} and \cite{Amari2016}. There, transformed-coordinate
averages arise from convex-geometric dual coordinates generated by
Legendre-type functions. The present framework instead restricts the charts to
cumulative distribution functions. This gives the geometry a direct
probabilistic interpretation: distance measures reference probability mass,
and chart choice specifies how resolution is distributed across value space.

The viewpoint also aligns with extensions of expectation based on generalized
means. A notable example is R\'enyi's formulation~\cite{renyi1961}, in which
generalized means deform the aggregation of probability weights in the
definition of entropy. Here the averaging operation remains linear on the
chosen probability scale; what changes is the geometry in which averaging is
performed. The distinction is therefore between deforming the aggregation rule
and selecting a different geometry for a fixed linear aggregation.

A CDF-based chart sends a random variable into the bounded interval $(0,1)$,
whose closure is compact, and compresses extreme values towards its boundary.
The term \emph{probability integral transform} is reserved here for the
intrinsic choice $G=F_X$, in which case $G(X)$ is uniform and contains no
marginal tail signature. For an external benchmark chart, boundary occupation
is instead chart-relative: after location and scale have been controlled,
excess occupation near $0$ and $1$ records mismatch between the law of $X$ and
the benchmark geometry. The probability measure of $X$ is not reweighted or
truncated; the geometry assigned to its support is changed.

Within this framework, the probability barycenter is
\[
b_G(X)=G^{-1}\!\bigl(\mathbb E[G(X)]\bigr),
\]
the pullback of a linear average on the probability scale and, equivalently,
the minimizer of expected squared distance under $d_G$. Classical expectation
corresponds to the uniform chart on a bounded interval, the median arises from
the intrinsic continuous chart, and other choices produce benchmark-relative
centres. This makes chart dependence a substantive modelling feature rather
than a coordinate artefact.

The principal contributions of the paper are the following.
\begin{enumerate}
\item We formalize the distinction between passive coordinate changes and
changes of geometry generated by charts. We show how two charts are related
by a positive rescaling of the one-dimensional metric, explain why the associated
naked metric spaces remain abstractly isometric, and identify
$(I,d_G,P_X)$ as the fixed-data object that changes with the chart.

\item We establish the ordinal-invariance/metric-sensitivity principle:
strictly increasing probability charts preserve order while modifying spacing.
We then characterize probability barycenters as Fr\'echet means and prove
rigidity: within normalized probability charts, universal preservation of the
barycenter forces the chart to be unchanged.

\item We establish laws of large numbers, central limit theorems, and a
non-asymptotic concentration inequality for probability-coordinate averages
and their pullbacks, under the stated inverse-regularity conditions.

\item We introduce initial and centred probability-coordinate moments and prove
that they determine every law supported on the chart domain, restoring moment
existence and determinacy on the bounded probability scale.

\item We distinguish intrinsic from benchmark geometry, develop
benchmark-relative boundary diagnostics for tail mismatch, and extend the
framework to multivariate copula coordinates.
\end{enumerate}

The framework does not restore a nonexistent classical mean. If
$\mathbb E|X|=\infty$, the classical strong law for the sample mean is not
available, and for some heavy-tailed laws the sample mean does not converge to
a finite deterministic centre. The objects studied here are different,
chart-dependent location functionals. Their stability comes from bounded
coordinate averaging, while their interpretation comes from the selected
probability geometry and the regularity of its inverse chart.

\medskip

The paper is organized as follows. Section~\ref{sec:prob-coords} introduces
probability charts and develops the distinction between passive coordinate
changes and chart-generated metrics. Section~\ref{sec:barycenters} consolidates
the barycentric construction and its relation to Kolmogorov-type expectations.
Section~\ref{sec:limit-thms} establishes asymptotic and non-asymptotic results.
Section~\ref{sec:Kolmogorov-moments} develops probability-coordinate moments
and moment determinacy. Section~\ref{sec:heavy-tails} studies benchmark-relative
boundary geometry, Section~\ref{sec:multivariate} gives the copula-coordinate
extension, and Section~\ref{sec:discussion} discusses the foundational
implications and further directions.

\section{Probability coordinates and probability geometry}\label{sec:prob-coords}

Classical expectation is an averaging operator in value space endowed
with Euclidean geometry. When probability mass concentrates in the tails, this geometry may become
poorly adapted to the distribution. Probability coordinates replace value
space geometry by a bounded metric geometry, with compact closure, in which a
barycentric structure is always available.

In this section we introduce the basic geometric framework underlying the
paper. The central idea is stronger than using a continuous cumulative
distribution function as a coordinate label: a probability chart maps value
space into probability space and, by pulling back the Euclidean metric, also
determines how the original values are spaced. Thus the chart is simultaneously
a coordinate map and a generator of geometry.

\subsection{Probability coordinate charts}
The next definition formalizes the notion of a probability coordinate chart
and separates the general construction from its canonical specialization.
\begin{definition}[Probability coordinate chart]\label{def:prob-chart}
Let $\widetilde G:\mathbb R\to[0,1]$ be a continuous cumulative distribution function. Suppose there exists an interval
\(
I=(a,b)\subseteq\mathbb R,
\)
such that: 

\begin{enumerate}
\item the restriction
\(
G=\widetilde G|_I
\)
is strictly increasing;

\item
\(
\lim_{x\to a^+}G(x)=0,
\qquad
\lim_{x\to b^-}G(x)=1.
\)
\end{enumerate}

Then the restriction
\(
G:I\rightarrow(0,1)
\)
is called the probability coordinate chart induced by $\widetilde G$.
\end{definition}

\begin{definition}[Probability barycenter]\label{def:prob-barycenter}
Given an $I$-valued random variable $X$, we define the
\emph{probability barycenter} by
\[
b_G(X):=G^{-1}\!\bigl(\mathbb{E}(G(X))\bigr).
\]
It is also called the \emph{Kolmogorov expectation induced by $G$}.
\end{definition}

\begin{remark}[Uniform chart]
The requirement that a probability-coordinate chart be strictly increasing
should be understood relative to the interval on which the chart is used,
rather than necessarily on the whole real line.

For example, the cumulative distribution function of the uniform law on
$[a,b]$ is
\[
G(x)=
\begin{cases}
0, & x\le a,\\[1mm]
\dfrac{x-a}{b-a}, & a<x<b,\\[2mm]
1, & x\ge b.
\end{cases}
\]
Viewed as a function on $\mathbb{R}$, $G$ is not strictly increasing because
it is constant outside $[a,b]$. Following the convention adopted in this
paper, we restrict attention to the maximal open interval on which $G$ is
strictly increasing,
\[
I=(a,b).
\]
On this interval,
\[
G_I(x)=\frac{x-a}{b-a}
\]
defines a bijection from $(a,b)$ onto $(0,1)$ and therefore qualifies as an
admissible chart.

If $X$ is a continuous random variable supported on $[a,b]$, then
\[
G_I(X)=\frac{X-a}{b-a},
\]
so the probability-coordinate representation reduces to a linear rescaling
of value space.

The associated probability barycenter is
\[
b_{G_I}(X)
=
G_I^{-1}\!\left(\mathbb{E}\!\left[G_I(X)\right]\right).
\]
Since
\[
G_I^{-1}(u)=a+(b-a)u,
\]
we obtain
\[
b_{G_I}(X)
=
a+(b-a)\,
\mathbb{E}\!\left(\frac{X-a}{b-a}\right)
=
\mathbb{E}(X).
\]

Thus the arithmetic mean appears as a particular probability barycenter,
namely the barycenter associated with the uniform chart. In this sense,
classical expectation is not external to the probability-coordinate
framework but is recovered as a distinguished choice of probability
geometry.

From a geometric perspective, the uniform chart may be viewed as a neutral
coordinate system. Its derivative
\[
G_I'(x)=\frac{1}{b-a}
\]
is constant, meaning that all regions of the interval receive the same
coordinate resolution. Unlike Gaussian, logistic, Student, or Cauchy
charts, the uniform chart does not preferentially emphasize central values,
tails, or any other specific geometric features.
\end{remark}

\begin{remark}[The inverse chart as geometric backbone]
The probability chart \(G\) sends values into probability coordinates, while
the inverse chart \(G^{-1}\) brings probability coordinates back to value
space. Thus, although the averaging operation is performed on the bounded
scale \(G(X)\in(0,1)\), its final interpretation is entirely determined by
the pullback map
\[
        G^{-1}:(0,1)\longrightarrow \mathbb R .
\]
In this sense, \(G^{-1}\) is not merely a technical inverse. It describes how
the canonical probability scale is unfolded into value space.

This observation is already visible in the density transformation
\[
        f_{G(X)}(u)
        =
        \frac{f_X(G^{-1}(u))}
             {g(G^{-1}(u))},
        \qquad 0<u<1,
\]
whenever $X$ has density $f_X$, $G$ is differentiable with density
$g=G'$, and $g(G^{-1}(u))>0$. The shape of the
probability-coordinate representation \(G(X)\) is therefore governed by the
interaction between the original density \(f_X\) and the geometry induced by
\(G^{-1}\). Regions where \(G^{-1}\) is steep correspond to an expansion of
probability coordinates in value space, while flatter regions correspond to
compression.

Consequently, the probability barycenter
\[
        b_G(X)=G^{-1}\bigl(\mathbb E(G(X))\bigr)
\]
should be understood as the result of two distinct operations: first a
barycentric averaging on the probability scale, and then a geometric
interpretation through the inverse chart. This separation clarifies why
different choices of \(G\) may produce different notions of centrality: they
do not merely transform the data, but generate different metrics on the fixed
value space, as formalized in Section~\ref{sec:chart-generated-geometry}. The
inverse chart then determines how a coordinate barycenter is interpreted in
that chart-generated geometry.
\end{remark}

\begin{figure}[H]
\centering
\resizebox{\textwidth}{!}{%
\begin{tikzpicture}[
    >=Latex,
    every node/.style={font=\small},
    point/.style={circle, fill=black, inner sep=1.5pt},
    obs/.style={circle, fill=blue!70!black, inner sep=1pt},
    probobs/.style={circle, fill=purple!75!black, inner sep=1pt},
    map/.style={->, thick},
    axis/.style={->, thick}
]

\node[font=\bfseries\large] at (0,4.2) {Value space};
\node at (0,3.8) {$\mathbb{R}$};

\draw[axis] (-3.6,0) -- (3.6,0) node[right] {$x$};

\draw[blue!70!black, thick, smooth, domain=-3.2:3.2, samples=120]
plot (\x,{2.2/(1+3*\x*\x)});

\node[blue!70!black] at (-2.3,1.1) {$f_X(x)$};

\foreach \x in {-3,-2.7,-2.5,-2.2,-2,-1.7,-1.3,-0.6,-0.4,-0.2,0.2,0.4,0.6,1.3,1.7,2.1,2.5,2.8,3.1}
    \node[obs] at (\x,0.06) {};

\node[point] (xaxis) at (0.6,0) {};
\node[point] (xdens) at (0.6,{2.2/(1+3*0.6*0.6)}) {};
\draw[dashed] (xaxis) -- (xdens);
\node[below] at (0.6,-0.1) {$x$};

\node[align=center] at (0,-1.1)
{Observations $X$ in value space.\\
Heavy tails extend to $\pm\infty$.};

\node[font=\bfseries\large] at (8,4.2) {Probability coordinates};
\node at (8,3.8) {$(0,1)$};

\draw[thick] (5,0) -- (11,0);
\node[below] at (5,-0.1) {$0$};
\node[below] at (11,-0.1) {$1$};

\foreach \u in {5.05,5.1,5.2,5.3,5.4,5.6,5.8}
    \node[probobs] at (\u,{0.05+0.02*rnd}) {};
\foreach \u in {10.2,10.4,10.5,10.6,10.7,10.8,10.9}
    \node[probobs] at (\u,{0.05+0.02*rnd}) {};

\node[point] (u) at (8.5,0) {};
\draw[dashed] (8.5,0) -- (8.5,-0.6);
\node[below] at (9.2,-0.15) {$u=G(x)$};

\node[point] (ubar) at (8,0) {};
\draw[->, thick, purple!80!black] (8,1.0) -- (8,0.1);
\node[purple!80!black] at (8,1.2) {$\bar u=\mathbb{E}(G(X))$};

\node[align=center] at (8.4,2.3)
{Coordinate variable $U=G(X)$.\\
Tail mismatch relative to $G$ $\rightarrow$ boundary occupation.};

\node[align=center] at (8,-1.1)
{Averaging in probability coordinates\\
(always well-defined).};

\draw[map] (2.5,1.4) .. controls (4.2,2.2) and (5.8,2.2) ..
node[above] {$G$} (7,1.4);

\draw[map] (7,-1.4) .. controls (5.8,-2.1) and (4.2,-2.1) ..
node[below] {$G^{-1}$} (2.5,-1.4);

\node[probobs, inner sep=2pt] (eg) at (0,-2.4) {};
\draw[dashed] (eg) -- (0,-0.2);

\node at (0,-2.7) {$b_G(X)=G^{-1}(\bar u)$};

\node[align=center] at (4,-3.6)
{Pull-back through $G^{-1}$ gives the\\
probability barycenter in value space.};

\end{tikzpicture}
}

\caption{Construction of the probability barycenter. Observations \(X\) in value space are mapped by a probability coordinate chart \(G\) into the unit interval \((0,1)\), where averaging is performed. The resulting coordinate barycenter \(\bar u=\mathbb{E}(G(X))\) is then pulled back through \(G^{-1}\) to value space, yielding the probability barycenter \(b_G(X)\). For a benchmark chart, occupation near \(0\) and \(1\) describes tail behaviour relative to the benchmark; for the continuous intrinsic chart, the coordinates are uniform.}
\label{fig:probability-coordinate-construction}
\end{figure}

\subsection{Chart-generated metrics: coordinate changes versus changes of geometry}
\label{sec:chart-generated-geometry}

The distinction between a passive re-expression of one geometry and the
selection of a new chart-generated geometry is the structural centre of the
framework.

\begin{definition}[Chart-generated distance]\label{def:induced-distance}
Let $G:I\to(0,1)$ be a probability coordinate chart. The distance generated by
$G$ is
\[
d_G(x,y):=|G(x)-G(y)|,\qquad x,y\in I.
\]
If $G$ is continuously differentiable with $G'(x)>0$ on $I$, the corresponding
one-dimensional Riemannian line element is
\[
ds_G^2=\bigl(G'(x)\bigr)^2\,dx^2.
\]
\end{definition}

\begin{remark}[A probability chart as a probabilistic ruler]
Let $Z_G$ have the continuous distribution function inducing the chart $G$.
For $x<y$ in $I$,
\[
d_G(x,y)=G(y)-G(x)=\Prob(x<Z_G\le y).
\]
Thus geometric length is reference probability mass. Intervals of equal
$G$-length contain equal mass under the chart law, even when their Euclidean
widths are very different. Choosing $G$ therefore specifies which portions of
value space are expanded, compressed, or placed close to the probability
boundary.
\end{remark}

\begin{proposition}[Passive reparametrization versus chart replacement]
\label{prop:passive-active}
Let $G,H:I\to(0,1)$ be probability coordinate charts on the same interval and
set
\[
T:=H\circ G^{-1}:(0,1)\to(0,1).
\]
Then $H=T\circ G$ and
\[
d_H(x,y)=\bigl|T(G(x))-T(G(y))\bigr|.
\]
If $G$ and $H$ are continuously differentiable with strictly positive
derivatives, then
\[
ds_H^2=\bigl(T'(G(x))\bigr)^2ds_G^2.
\]
By contrast, if $v=H(x)$ is used merely as a new coordinate in which to express
the already fixed geometry $d_G$, then
\[
ds_G^2
=
\left((G\circ H^{-1})'(v)\right)^2dv^2,
\]
not $dv^2$ in general.
\end{proposition}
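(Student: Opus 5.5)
The plan is to verify each assertion directly from the definitions, since the statement is essentially a chain-rule computation. First, because $G:I\to(0,1)$ is a strictly increasing continuous bijection (Definition~\ref{def:prob-chart}), it has a continuous strictly increasing inverse $G^{-1}:(0,1)\to I$. The same holds for $H$. Hence $T=H\circ G^{-1}$ is a well-defined strictly increasing bijection of $(0,1)$ onto itself, and $T\circ G=H\circ G^{-1}\circ G=H$ on $I$. Substituting this identity into Definition~\ref{def:induced-distance} gives
\[
d_H(x,y)=|H(x)-H(y)|=|T(G(x))-T(G(y))|
\]
at once.

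For the line element, assume $G,H\in C^1(I)$ with $G',H'>0$. By the inverse function theorem, $G^{-1}$ is $C^1$ on $(0,1)$ with
\[
(G^{-1})'(u)=1/G'(G^{-1}(u)).
\]
Therefore $T$ is $C^1$ with
\[
T'(u)=H'(G^{-1}(u))/G'(G^{-1}(u)).
\]
Differentiating $H=T\circ G$ by the chain rule gives $H'(x)=T'(G(x))\,G'(x)$. Squaring and multiplying by $dx^2$ yields
\[
ds_H^2=\bigl(H'(x)\bigr)^2dx^2=\bigl(T'(G(x))\bigr)^2\bigl(G'(x)\bigr)^2dx^2=\bigl(T'(G(x))\bigr)^2ds_G^2.
\]

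For the passive statement, write $x=H^{-1}(v)$, so that $G(x)=(G\circ H^{-1})(v)$. The fixed metric $d_G$ expressed in the $v$-coordinate is $|(G\circ H^{-1})(v)-(G\circ H^{-1})(v')|$. Its line element follows from the chain rule applied to $dx=(H^{-1})'(v)\,dv$:
\[
ds_G^2=\bigl(G'(H^{-1}(v))\bigr)^2\bigl((H^{-1})'(v)\bigr)^2dv^2=\bigl((G\circ H^{-1})'(v)\bigr)^2dv^2.
\]

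The only part needing genuine argument is the qualifier ``not $dv^2$ in general''. I would make it precise as an equivalence. The identity $((G\circ H^{-1})')^2\equiv 1$ holds if and only if $G\circ H^{-1}$ has derivative identically $1$, since the derivative is positive. Because $G\circ H^{-1}$ is a bijection of $(0,1)$ onto itself, the boundary limits force it to be the identity, that is, $G=H$. So any two distinct charts furnish an instance where the coefficient is not $1$. A concrete witness would be $G(x)=x$ and $H(x)=x^2$ on $(0,1)$, with coefficient $1/(4v)$.

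This last step is the main point to handle carefully, since it distinguishes the active replacement from passive reparametrization. Everything before it is routine calculus.
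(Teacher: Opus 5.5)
Your proposal is correct and follows the paper's own proof. Like the paper, you get $H=T\circ G$ directly from the definition of $T$, use the chain rule $H'(x)=T'(G(x))G'(x)$ for the active line element, and differentiate $G(x)=(G\circ H^{-1})(v)$ for the passive one. Your added argument that $((G\circ H^{-1})')^2\equiv 1$ forces $G=H$, with the witness $G(x)=x$, $H(x)=x^2$ giving coefficient $1/(4v)$, is also correct: it makes precise the qualifier ``not $dv^2$ in general'', which the paper leaves unproved.
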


\begin{proof}
The identity $H=T\circ G$ follows from the definition of $T$, and the distance
formula follows immediately. Differentiating gives
$H'(x)=T'(G(x))G'(x)$, which yields the relation between the two line elements.
For the passive coordinate expression, write
$G(x)=(G\circ H^{-1})(v)$ and differentiate.
\end{proof}

\begin{remark}[The metric-reassignment step]
In an ordinary coordinate change, the coefficient
$((G\circ H^{-1})'(v))^2$ is retained, so the underlying geometry is unchanged.
The probability-coordinate framework performs a different operation when it
replaces $G$ by $H$: after passing to $v=H(x)$, it equips the $v$-scale anew with
the standard Euclidean metric $dv^2$. Pulling that metric back produces $d_H$.
Hence chart replacement is not merely a change of notation; it is a coordinate
change followed by the reimposition of the Euclidean metric.
\end{remark}

\begin{figure}[H]
\centering
\resizebox{\textwidth}{!}{%
\begin{tikzpicture}[
    >=Latex,
    every node/.style={font=\small},
    box/.style={draw, rounded corners, align=center, minimum height=1.15cm,
                text width=4.25cm, inner sep=6pt},
    arrow/.style={->, thick}
]

\node[font=\bfseries] at (-6.1,2.05) {Passive coordinate change};
\node[box, draw=purple!80!black] (pg1) at (-6.1,0.8)
{$u=G(x)$\\ $ds_G^2=du^2$};
\node[box, draw=purple!80!black] (pg2) at (0,0.8)
{$v=H(x)=T(u)$\\
$ds_G^2=((T^{-1})'(v))^2dv^2$};
\draw[arrow, purple!80!black] (pg1) -- node[above]{same geometry} (pg2);

\node[font=\bfseries] at (-6.1,-1.25) {Chart replacement};
\node[box, draw=orange!85!black] (cg1) at (-6.1,-2.5)
{fixed observations $X$\\ fixed law $P_X$};
\node[box, draw=orange!85!black] (cg2) at (0,-2.5)
{choose $H$ and declare\\ $ds_H^2=dv^2$};
\node[box, draw=orange!85!black] (cg3) at (6.1,-2.5)
{new fixed-data geometry\\ $(I,d_H,P_X)$};
\draw[arrow, orange!85!black] (cg1) -- node[above]{metric choice} (cg2);
\draw[arrow, orange!85!black] (cg2) -- node[above]{pullback} (cg3);

\end{tikzpicture}
}
\caption{Passive reparametrization versus chart-generated geometry. In the top
row, the metric coefficient is transformed together with the coordinate, so
the geometry remains $d_G$. In the bottom row, the observations and their law
are kept fixed while the new chart coordinate is equipped with a fresh
Euclidean metric; pulling it back produces the genuinely different geometry
$d_H$.}
\label{fig:passive-active-chart-change}
\end{figure}

\begin{proposition}[Abstract isometry and fixed-data geometry]
\label{prop:marked-isometry}
For any two probability coordinate charts $G$ and $H$ on $I$, the map
\[
\Phi_{G,H}:=H^{-1}\circ G
\]
is an isometry from $(I,d_G)$ onto $(I,d_H)$. More precisely,
\[
d_H\bigl(\Phi_{G,H}(x),\Phi_{G,H}(y)\bigr)=d_G(x,y).
\]
Consequently,
\[
(I,d_G,P_X)
\quad\text{is isomorphic to}\quad
(I,d_H,(\Phi_{G,H})_{\#}P_X),
\]
but not in general to $(I,d_H,P_X)$.
\end{proposition}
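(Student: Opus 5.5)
The plan has three parts: verify that $\Phi_{G,H}$ is a well-defined isometry, transport the measure along it, and then exhibit a concrete example showing that $(I,d_G,P_X)$ and $(I,d_H,P_X)$ need not be isomorphic.

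\emph{Isometry.} Since $G$ and $H$ are strictly increasing continuous bijections $I\to(0,1)$, their inverses exist and are continuous. Hence $\Phi_{G,H}=H^{-1}\circ G$ is a bijection of $I$ onto itself, with inverse $\Phi_{H,G}=G^{-1}\circ H$. Using $H\circ H^{-1}=\mathrm{id}_{(0,1)}$, we get
\[
d_H\bigl(\Phi_{G,H}(x),\Phi_{G,H}(y)\bigr)=\bigl|H(H^{-1}(G(x)))-H(H^{-1}(G(y)))\bigr|=|G(x)-G(y)|=d_G(x,y).
\]
A distance-preserving bijection is an isometry onto $(I,d_H)$.

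\emph{Isomorphism of metric-measure spaces.} An isomorphism is a measure-preserving isometry. Pushing $P_X$ forward along $\Phi_{G,H}$ gives $(\Phi_{G,H})_\#P_X$, so $\Phi_{G,H}$ is by construction such a map onto $(I,d_H,(\Phi_{G,H})_\#P_X)$. The measurability of $\Phi_{G,H}$ is automatic, since it is continuous; this settles the positive claim.

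\emph{Non-isomorphism in general.} This is the only nontrivial step. The plan is to find an invariant of metric-measure isomorphism that separates $(I,d_G,P_X)$ from $(I,d_H,P_X)$ for suitable $G,H,P_X$. A natural invariant is the law of the distance between two independent samples:
\[
\mathcal L\bigl(d(X,X')\bigr),\qquad X,X'\ \text{i.i.d.}\sim P_X .
\]
In particular the mean pair distance $\E\, d(X,X')$ is preserved by any measure-preserving isometry. In the two spaces it equals $\E|G(X)-G(X')|$ and $\E|H(X)-H(X')|$ respectively.

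For the example, take $I=(0,1)$, $G(x)=x$, $H(x)=x^2$, and $P_X$ uniform on $(0,1)$. Then $G(X)$ is uniform and $\E|G(X)-G(X')|=1/3$. On the other hand, $H(X)=X^2$ has a non-uniform law, and a short integral $\iint|x^2-y^2|\,dx\,dy$ gives a value different from $1/3$; computing it explicitly is routine. This shows the two structures are not isomorphic, which justifies the words ``not in general.''

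A simpler alternative invariant, if preferred, is the diameter of the support in each metric. Either invariant suffices.
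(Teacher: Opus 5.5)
The isometry and pushforward parts are correct and match the paper's argument, which uses $H(\Phi_{G,H}(x))=G(x)$ and then transports $P_X$ by pushforward. The gap is in the non-isomorphism step. The paper handles that step only by noting $(\Phi_{G,H})_{\#}P_X\neq P_X$ in general. You rightly try to prove more by exhibiting an isomorphism invariant, but neither invariant you name separates the two structures in your example. The integral you call routine is
\[
\iint_{(0,1)^2}|x^2-y^2|\,dx\,dy
=2\int_0^1\!\!\int_0^x (x^2-y^2)\,dy\,dx
=2\int_0^1 \tfrac23 x^3\,dx=\tfrac13 ,
\]
which is exactly $\E|X-X'|$. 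Equivalently, $2\int_0^1 F(t)(1-F(t))\,dt=\tfrac13$ both for $F(t)=t$ and for $F(t)=\sqrt t$. The diameter fails as well: both diameters equal $1$. This is automatic for any law with full support on $I$, since $G(I)=H(I)=(0,1)$. So ``either invariant suffices'' is false, and as written nothing shows the two spaces are non-isomorphic.

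The repair is easy, and once repaired your argument is more complete than the paper's. The paper's remark only shows that the particular map $\Phi_{G,H}$ is not measure preserving; it does not rule out some other isomorphism. Any of the following works:
\begin{itemize}
\item Use the full law of $d(X,X')$, for instance its second moment $\E\,d_G(X,X')^2=2\Var(G(X))$, which is twice the Kolmogorov variance. It equals $2\cdot\tfrac1{12}=\tfrac16$ for $d_G$ and $2\bigl(\tfrac15-\tfrac19\bigr)=\tfrac{8}{45}$ for $d_H$.
\item Change the chart to $H(x)=x^3$, for which the mean pair distance is $\tfrac{3}{10}\neq\tfrac13$.
\item Avoid invariants altogether. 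The only isometries of $(0,1)$ onto itself are $u\mapsto u$ and $u\mapsto 1-u$, so every isometry of $(I,d_G)$ onto $(I,d_H)$ is $H^{-1}\circ G$ or $H^{-1}\circ(1-G)$. Hence, for fully supported $P_X$, $(I,d_G,P_X)\cong(I,d_H,P_X)$ if and only if $H(X)$ has the law of $G(X)$ or of $1-G(X)$. In your example both $G(X)$ and $1-G(X)$ are uniform, while $X^2$ is not.
\end{itemize}
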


\begin{proof}
Since $H(\Phi_{G,H}(x))=G(x)$,
\[
d_H\bigl(\Phi_{G,H}(x),\Phi_{G,H}(y)\bigr)
=
|G(x)-G(y)|
=
d_G(x,y).
\]
An isometry transports the accompanying measure by pushforward. In general,
\[
(\Phi_{G,H})_{\#}P_X\neq P_X,
\]
so keeping $P_X$ fixed produces a different metric-measure structure.
\end{proof}

\begin{remark}[Why abstract isometry does not remove chart dependence]
Every $(I,d_G)$ is isometric to the ordinary interval $(0,1)$, so the naked
spaces have the same abstract metric type. The framework, however, does not
transport the observations through $\Phi_{G,H}$ when a benchmark chart is
changed. It compares the same labelled values and the same law under different
metrics. The relevant object is therefore the marked metric-measure structure
$(I,d_G,P_X)$, not the unlabelled isometry class of $(I,d_G)$.
\end{remark}

\begin{corollary}[Ordinal invariance and metric sensitivity]
\label{cor:ordinal-metric}
All probability charts preserve the order on $I$:
\[
x<y\quad\Longleftrightarrow\quad G(x)<G(y).
\]
Hence rank statements and purely ordinal summaries are chart-invariant. In
contrast, metric constructions such as $d_G(x,y)$, chart midpoints
\[
m_G(x,y)=G^{-1}\!\left(\frac{G(x)+G(y)}{2}\right),
\]
Fr\'echet barycenters and variances, as well as boundary distances, are
generally chart-dependent.
\end{corollary}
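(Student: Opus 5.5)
The corollary has two halves, and I will prove them separately.

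\textbf{Ordinal invariance.} Definition~\ref{def:prob-chart} makes every chart $G:I\to(0,1)$ strictly increasing. Strict monotonicity gives $x<y\Rightarrow G(x)<G(y)$ directly. For the converse I argue by contraposition: if $x\ge y$, then either $x=y$ and $G(x)=G(y)$, or $x>y$ and $G(x)>G(y)$.

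So $x<y\Leftrightarrow G(x)<G(y)$ holds for every admissible chart. Since $H=T\circ G$ with $T=H\circ G^{-1}$ strictly increasing on $(0,1)$ (Proposition~\ref{prop:passive-active}), the order on $I$ read through $G$ and read through $H$ coincide. Rank statements depend only on this order relation among the fixed observations. Hence ranks, empirical ranks, quantile indices and similar ordinal summaries are identical for all charts.

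\textbf{Metric sensitivity.} Because the claim is ``generally chart-dependent'', one explicit example per quantity suffices. I will take $I=(0,1)$, $G(x)=x$ (the uniform chart) and $H(x)=x^2$, which is an admissible chart as the CDF of $\sqrt{U}$.

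\begin{enumerate}
\item \emph{Distances.} We have $d_G(0.1,0.2)=0.1$ but $d_H(0.1,0.2)=0.03$.

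\item \emph{Midpoints.} We have $m_G(0,1)$-type values, for instance $m_G(0.2,0.6)=0.4$. By contrast, $m_H(0.2,0.6)=\sqrt{(0.04+0.36)/2}=\sqrt{0.2}\neq 0.4$.

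\item \emph{Barycenters.} Take $X$ uniform on $\{0.2,0.6\}$. By the computation in the uniform-chart remark, $b_G(X)=\mathbb E X=0.4$, while $b_H(X)=\sqrt{0.2}$.
\begin{itemize}
\item Both values are Fr\'echet minimizers of $\mathbb E\,d^2(X,c)$. The reason is that $c\mapsto\mathbb E(G(X)-G(c))^2$ is minimized at $G(c)=\mathbb E G(X)$.
\item This step only needs the elementary fact that the mean minimizes squared loss on $(0,1)$, together with the bijectivity of $G$.
\end{itemize}

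\item \emph{Fr\'echet variances.} These are $\Var G(X)=0.04$ versus $\Var H(X)=0.0256$.

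\item \emph{Boundary distances.} The distance to the boundary is $\min(G(x),1-G(x))$. At $x=0.5$ this equals $0.5$ under $G$ and $0.25$ under $H$.
\end{enumerate}

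The main obstacle is conceptual rather than technical. I must make precise that ``chart-dependent'' means dependence with the data $x,y$ and the law $P_X$ held fixed, not up to the isometry $\Phi_{G,H}$. Proposition~\ref{prop:marked-isometry} shows that transporting the data would make everything agree. I will therefore stress that all the examples above keep the points and $P_X$ fixed, which is exactly the fixed-data structure $(I,d_G,P_X)$ versus $(I,d_H,P_X)$.
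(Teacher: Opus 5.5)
Your proof is correct. The ordinal half is exactly what the paper relies on: the corollary is stated without a separate proof, as an immediate consequence of the strict monotonicity built into Definition~\ref{def:prob-chart}.

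For the metric half you take a different route. The paper's justification is structural. By Proposition~\ref{prop:passive-active}, $d_H(x,y)=|T(G(x))-T(G(y))|$ with $T=H\circ G^{-1}$, so every $H$-construction is the corresponding $G$-construction distorted by $T$. The later rigidity results, Proposition~\ref{prop:metric-rigidity} and Corollary~\ref{cor:chart-rigidity}, show this distortion is never invisible: any two distinct normalized charts give different distances and, for some law, different barycenters.

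You instead exhibit one explicit pair, $G(x)=x$ and $H(x)=x^2$ on $(0,1)$. Your numbers all check out: $0.1$ vs.\ $0.03$, $0.4$ vs.\ $\sqrt{0.2}$, $0.04$ vs.\ $0.0256$, and $0.5$ vs.\ $0.25$. This is more elementary, and it covers every quantity in the list, including midpoints, Fr\'echet variances and boundary distances, which the paper never treats individually. However, it only shows that invariance fails for \emph{some} pair of charts, which is the weak reading of ``generally''.

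You can get the strong reading with little extra work. Two-point laws reduce equality of midpoints (or barycenters) for all inputs to $T\bigl(\tfrac{u+v}{2}\bigr)=\tfrac{T(u)+T(v)}{2}$. They reduce equality of variances to $|T(u)-T(v)|=|u-v|$. Either forces $T=\mathrm{id}$ by continuity and the endpoint normalization.

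Finally, drop the stray phrase ``$m_G(0,1)$-type values'', since $0,1\notin I$.
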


\begin{proposition}[Rigidity of the chart-generated metric]
\label{prop:metric-rigidity}
Let $G,H:I\to(0,1)$ be increasing probability coordinate charts. If
\[
d_G(x,y)=d_H(x,y)
\qquad\text{for every }x,y\in I,
\]
then $G=H$.
\end{proposition}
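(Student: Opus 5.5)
The plan is to show first that $G-H$ is constant on $I$, and then to use the boundary normalization of Definition~\ref{def:prob-chart} to force that constant to be zero. For $x<y$ in $I$, both charts are strictly increasing, so the absolute values in Definition~\ref{def:induced-distance} can be removed:
\[
d_G(x,y)=G(y)-G(x),\qquad d_H(x,y)=H(y)-H(x).
\]
The hypothesis $d_G=d_H$ then gives $G(y)-G(x)=H(y)-H(x)$ for all $x<y$. The same identity holds trivially when $x=y$, and by symmetry when $x>y$. Rearranging yields
\[
G(y)-H(y)=G(x)-H(x)\qquad\text{for all }x,y\in I,
\]
so $G-H\equiv c$ on $I$ for some constant $c$.

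Next, I will determine $c$ from the endpoint conditions. Condition~2 of Definition~\ref{def:prob-chart} states that $\lim_{x\to a^+}G(x)=0$ and $\lim_{x\to a^+}H(x)=0$. Letting $x\to a^+$ in $G(x)-H(x)=c$ therefore gives $c=0$, and hence $G=H$ on $I$. The upper endpoint could equally be used, since both limits there equal $1$.

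There is no real obstacle; the only point needing care is the removal of the absolute value. A priori, $|G(x)-G(y)|=|H(x)-H(y)|$ only says $G(x)-G(y)=\pm\bigl(H(x)-H(y)\bigr)$. The common monotone direction of the two charts is what fixes the sign to be $+$. Without it one would only get $H=G$ or $H=1-G$, the latter being the reflection chart. This is also why the statement assumes both charts are increasing.
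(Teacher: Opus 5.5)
Your proof is correct and follows the same route as the paper. You use the common increasing direction to drop the absolute values and conclude that $G-H$ is constant on $I$, then use the endpoint normalization of Definition~\ref{def:prob-chart} to force the constant to be zero.
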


\begin{proof}
For $x<y$, equality of distances gives
$G(y)-G(x)=H(y)-H(x)$. Thus $H-G$ is constant on $I$. Since both charts tend
to $0$ at the left endpoint and to $1$ at the right endpoint, the constant is
zero.
\end{proof}

\begin{proposition}[Geometric characterization]\label{prop:geometric-characterization}
Assume $\mathbb{E}(G(X))$ exists. Then
\[
b_G(X) = \arg\min_{c \in I} \mathbb{E}\big( d_G(X,c)^2 \big).
\]
\end{proposition}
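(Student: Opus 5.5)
The plan is to reduce the minimization on $I$ to an ordinary least-squares problem on the probability scale by means of the chart itself. Put $U:=G(X)$, which takes values in $(0,1)$ and is therefore bounded. Hence $\mu:=\mathbb E(U)$ exists; this is the hypothesis of the proposition, and here it is automatic. The variance $\mathbb E(U^2)-\mu^2$ is also finite. For any $c\in I$ set $v:=G(c)\in(0,1)$. By Definition~\ref{def:induced-distance},
\[
\mathbb E\bigl(d_G(X,c)^2\bigr)=\mathbb E\bigl((U-v)^2\bigr).
\]
So the objective depends on $c$ only through $v=G(c)$.

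Next I would use the standard bias--variance expansion. Write $U-v=(U-\mu)+(\mu-v)$ and expand the square. The cross term has expectation $2(\mu-v)\,\mathbb E(U-\mu)=0$, so
\[
\mathbb E\bigl((U-v)^2\bigr)=\Var(U)+(\mu-v)^2 .
\]
The first summand does not depend on $c$. The objective is therefore minimized exactly when $v=\mu$, and this minimizer is unique because $(\mu-v)^2>0$ for $v\neq\mu$.

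The remaining step, and the only one that needs care, is to check that $v=\mu$ is attainable by some $c\in I$. Since $0<U<1$ almost surely, we have $0<\mu<1$, with strict inequalities because $\mathbb E(U)=0$ would force $U=0$ almost surely, which is impossible. By Definition~\ref{def:prob-chart}, $G$ is a continuous, strictly increasing bijection of $I$ onto $(0,1)$. The endpoint limits $0$ and $1$ together with the intermediate value theorem give surjectivity. Hence there is exactly one $c^\ast=G^{-1}(\mu)\in I$ with $G(c^\ast)=\mu$. Injectivity of $G$ turns uniqueness of the minimizing $v$ into uniqueness of the minimizing $c$.

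Combining these steps, the $\arg\min$ is the single point $c^\ast=G^{-1}(\mathbb E(G(X)))=b_G(X)$, which is the claim. I expect no real obstacle. The subtle points are only the strictness $\mu\in(0,1)$, so that the pullback lands in $I$ rather than at an endpoint, and the remark that no integrability of $X$ itself is needed.
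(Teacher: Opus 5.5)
Your proposal is correct and follows the paper's own argument: rewrite $\mathbb{E}(d_G(X,c)^2)$ as $\mathbb{E}((G(X)-G(c))^2)$, observe that it is minimized exactly when $G(c)=\mathbb{E}(G(X))$, and pull back through $G^{-1}$. The paper's proof is two lines, and your bias--variance identity, your check that $\mu\in(0,1)$ is attained by $G$ on $I$, and your uniqueness argument via injectivity spell out steps it leaves implicit.
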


\begin{proof}
We write
\[
\mathbb{E}(d_G(X,c)^2) = \mathbb{E}((G(X) - G(c))^2).
\]
This is minimized when $G(c) = \mathbb{E}(G(X))$, hence
\[
c = G^{-1}(\mathbb{E}(G(X))).
\]
\end{proof}

\begin{proposition}[Passive covariance of probability barycenters]
\label{prop:passive-covariance}
Let $G$ and $H$ be probability coordinate charts on $I$, and let
$\Phi_{G,H}=H^{-1}\circ G$. Then
\[
b_H\bigl(\Phi_{G,H}(X)\bigr)
=
\Phi_{G,H}\bigl(b_G(X)\bigr).
\]
\end{proposition}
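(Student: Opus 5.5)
The plan is to compute both sides directly from Definition~\ref{def:prob-barycenter}, using only the identity $H\circ\Phi_{G,H}=G$ on $I$, which was already noted in the proof of Proposition~\ref{prop:marked-isometry}.

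First, I will check that $\Phi_{G,H}(X)$ is an admissible input. Both $G$ and $H$ are bijections from $I$ onto $(0,1)$, so $\Phi_{G,H}=H^{-1}\circ G$ is a strictly increasing bijection of $I$ onto itself. Hence $Y:=\Phi_{G,H}(X)$ is an $I$-valued random variable. Measurability follows from continuity of $G$ and of $H^{-1}$, the latter being the inverse of a continuous strictly increasing map on an interval.

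Next, I will compute the coordinate average. Since $H(Y)=H(H^{-1}(G(X)))=G(X)$, we get $\mathbb E[H(Y)]=\mathbb E[G(X)]$. This expectation always exists because $G(X)$ takes values in $(0,1)$. It also lies strictly inside $(0,1)$, since a $(0,1)$-valued variable has mean in $(0,1)$, so $H^{-1}$ and $G^{-1}$ can be applied to it. Therefore
\[
b_H(Y)=H^{-1}\bigl(\mathbb E[G(X)]\bigr)=H^{-1}\bigl(G(G^{-1}(\mathbb E[G(X)]))\bigr)=\Phi_{G,H}\bigl(b_G(X)\bigr),
\]
after inserting $G\circ G^{-1}=\mathrm{id}_{(0,1)}$.

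There is no real obstacle here; the only point requiring care is confirming that $\mathbb E[G(X)]\in(0,1)$, so that both inverse charts are evaluated inside their domains. As a consistency check, the identity can also be read through Proposition~\ref{prop:geometric-characterization}. The isometry $\Phi_{G,H}$ carries the $d_G$-Fréchet functional of $P_X$ to the $d_H$-Fréchet functional of the pushforward law, so it maps minimizers to minimizers. I will mention this only as an interpretation of the result as passive covariance, in contrast with the chart-replacement setting where $P_X$ is kept fixed.
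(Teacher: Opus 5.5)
Your proposal is correct and follows essentially the same route as the paper: both use the identity $H\circ\Phi_{G,H}=G$ to obtain $b_H(\Phi_{G,H}(X))=H^{-1}(\mathbb E[G(X)])=\Phi_{G,H}(b_G(X))$. Your checks that $\Phi_{G,H}(X)$ is $I$-valued and measurable, and that $\mathbb E[G(X)]\in(0,1)$, are details the paper's proof leaves implicit.
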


\begin{proof}
Since $H(\Phi_{G,H}(X))=G(X)$,
\[
b_H\bigl(\Phi_{G,H}(X)\bigr)
=H^{-1}\!\bigl(\mathbb E[G(X)]\bigr)
=H^{-1}\!\circ G\bigl(b_G(X)\bigr).
\]
\end{proof}

\begin{remark}[What coordinate invariance would mean]
Proposition~\ref{prop:passive-covariance} is the genuine covariance statement:
the chart, the data, and the centre are transported together. The comparison
of substantive interest in this paper is instead
\[
b_G(X)\quad\text{versus}\quad b_H(X),
\]
where $X$ is held fixed and the metric changes. Writing $H=T\circ G$, the
second centre can be expressed in $G$-coordinates as
\[
b_H(X)
=
G^{-1}\!\left(
T^{-1}\!\left(\mathbb E[T(G(X))]\right)
\right).
\]
Thus changing charts replaces the arithmetic mean of $G(X)$ by the
quasi-arithmetic mean generated by $T$; this is a change of barycentric
geometry, not a coordinate artefact.
\end{remark}

\begin{remark}
In this framework, the statistical functional arises as a consequence of the
geometry generated by the probability chart. The chart does not merely supply
coordinates in which a pre-existing centre is rewritten; it determines the
squared-distance loss whose minimizer defines the centre.
\end{remark}

\begin{remark}
When $G = F_X$, the cumulative distribution function of a random variable $X$, 
this construction recovers the canonical probability coordinate system induced by $X$.
\end{remark}
\begin{remark}[Scope of the framework]
The probability coordinate chart $G$ is assumed to be a continuous cumulative distribution 
function that is strictly increasing on an interval $I$, ensuring the existence of a classical inverse.

Subject only to $\Prob(X\in I)=1$, the random variable $X$ may be continuous,
discrete, or mixed. The transformation $G(X)$ is then well-defined and bounded,
and therefore integrable, regardless of the nature of the law of $X$.

Thus, continuity is a requirement on the coordinate chart $G$, not on the random variable $X$.
\end{remark}
\begin{remark}\label{rem:general-chart-not-canonical}
Definition~\ref{def:prob-chart} yields a family of coordinate-induced barycenters, one for each admissible chart $G$. In general, such a chart is \emph{not} assumed to coincide with the distribution function of $X$. Consequently, for a general probability coordinate chart $G$, the transformed variable $G(X)$ need not be uniformly distributed on $(0,1)$. Thus the construction
\[
b_G(X)=G^{-1}\!\bigl(\mathbb{E}(G(X))\bigr)
\]
should be understood, at this level, as a chart-dependent barycentric operation in probability coordinates, rather than as an intrinsic object attached to the law of $X$.
\end{remark}

The canonical case arises when the chart is generated by the law of the random variable itself.

\begin{proposition}[Canonical chart and uniformization]\label{prop:canonical-chart-uniformization}
Let $X$ be a real-valued random variable with continuous strictly increasing distribution function
\[
F_X\colon I\to (0,1)
\]
on an interval $I\subseteq \mathbb{R}$. Then:

\begin{enumerate}
\item $F_X$ is a probability coordinate chart on $I$ in the sense of Definition~\ref{def:prob-chart};

\item the transformed variable
\[
U:=F_X(X)
\]
is uniformly distributed on $(0,1)$;

\item the associated barycenter satisfies
\[
b_{F_X}(X)=F_X^{-1}\!\bigl(\mathbb{E}(U)\bigr)
      =F_X^{-1}\!\left(\frac12\right).
\]
\end{enumerate}
\end{proposition}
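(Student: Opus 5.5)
We will verify the three items in turn, each resting on a different ingredient: the chart axioms, the probability integral transform, and the uniform mean.

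For item 1 we take $\widetilde G:=F_X$ on $\mathbb R$, which is a continuous distribution function by hypothesis. We write $I=(a,b)$, with the understanding (as in the uniform-chart remark) that $I$ is the open interval on which $F_X$ is strictly increasing, with $P(X\in I)=1$. Condition~1 of Definition~\ref{def:prob-chart} is then immediate. For condition~2, note that $F_X$ is nondecreasing and continuous on $\mathbb R$. If $a>-\infty$, then $F_X$ is constant on $(-\infty,a]$; this constant is $0$ because $F_X(-\infty)=0$, so $\lim_{x\to a^+}F_X(x)=F_X(a)=0$. If $a=-\infty$, the limit is $0$ by the CDF axiom. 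The right endpoint is handled symmetrically, giving the limit $1$. Together with continuity and strict monotonicity, these limits show that $F_X:I\to(0,1)$ is a bijection, so $F_X^{-1}$ is a classical inverse.

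For item 2 we fix $u\in(0,1)$ and compute $\Prob(F_X(X)\le u)$. Since $F_X$ is a strictly increasing bijection $I\to(0,1)$ and $X\in I$ almost surely, we have $\{F_X(X)\le u\}=\{X\le F_X^{-1}(u)\}$ up to a null set. Hence
\[
\Prob(U\le u)=F_X\bigl(F_X^{-1}(u)\bigr)=u .
\]
Moreover $U\in(0,1)$ almost surely, so $U\sim\Unif(0,1)$.

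For item 3, $U$ is bounded, so $\E(U)=\int_0^1u\,du=\tfrac12$. Substituting into Definition~\ref{def:prob-barycenter} gives $b_{F_X}(X)=F_X^{-1}(1/2)$. By strict monotonicity this value is the unique median of $X$.

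The main obstacle is the bookkeeping in item 1. The statement treats $F_X$ as defined on $I$, but the boundary limits and the null-set argument in item 2 both need $P(X\in I)=1$. This holds exactly when $I$ is the support interval, i.e. when $F_X$ vanishes left of $a$ and equals $1$ right of $b$. I would make this interpretation explicit at the start, following the uniform-chart convention, and the rest is routine.
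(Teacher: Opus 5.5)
Your proof is correct and follows essentially the same route as the paper: the chart property from continuity, strict monotonicity and $F_X(I)=(0,1)$, then $\Prob(U\le u)=\Prob\bigl(X\le F_X^{-1}(u)\bigr)=u$, then $\E(U)=\tfrac12$. Your extra bookkeeping in item~1 (reading the hypothesis as $\Prob(X\in I)=1$ with $I$ open, and deriving the endpoint limits from it) only makes explicit what the paper takes for granted when it asserts $F_X(I)=(0,1)$.
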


\begin{proof}
Since $F_X$ is continuous and strictly increasing on $I$ and satisfies $F_X(I)=(0,1)$, it is a probability coordinate chart.

Set $U=F_X(X)$. For every $u\in(0,1)$,
\[
\mathbb{P}(U\le u)
=\mathbb{P}\bigl(F_X(X)\le u\bigr)
=\mathbb{P}\bigl(X\le F_X^{-1}(u)\bigr)
=F_X\!\bigl(F_X^{-1}(u)\bigr)
=u,
\]
so $U\sim \mathrm{Uniform}(0,1)$.

Therefore,
\[
\mathbb{E}(U)=\int_0^1 u\,du=\frac12,
\]
and hence
\[
b_{F_X}(X)
=F_X^{-1}\!\bigl(\mathbb{E}(F_X(X))\bigr)
=F_X^{-1}\!\bigl(\mathbb{E}(U)\bigr)
=F_X^{-1}\!\left(\frac12\right).
\]
\end{proof}

\begin{remark}[Probability coordinates beyond continuity]\label{rem:probability-coordinates-general}
The construction
\[
b_G(X)=G^{-1}\bigl(\mathbb{E}(G(X))\bigr)
\]
does not require $G$ to coincide with the distribution function of $X$, nor does it rely on continuity assumptions on the law of $X$: for any admissible chart $G$ and any law supported on $I$---continuous, discrete, or mixed---the quantity $\mathbb{E}(G(X))$ lies in $(0,1)$ and the barycenter is well defined. The essential feature of the construction is therefore not uniformization, which is specific to the continuous intrinsic case, but the use of probability itself as a coordinate system.

One combination lies outside the present framework and deserves explicit mention. If $X$ has atoms and one attempts the intrinsic choice $G=F_X$, then $F_X$ is discontinuous, fails Definition~\ref{def:prob-chart}, and its range omits the jump intervals: the classical inverse does not exist, and $\mathbb{E}(F_X(X))$ may fall inside a gap of the range attained by no value of $x$. In this case the map $x\mapsto F_X(x)$ still provides a useful parametrization of probability mass, in which atoms correspond to intervals of mass concentration, but no pullback $b_{F_X}(X)$ is claimed here. An extension of the intrinsic construction to atomic laws, based on the generalized quantile function $F_X^{-1}(u)=\inf\{x: F_X(x)\ge u\}$, is developed in companion work on the discrete and mixed case.
\end{remark}
\subsubsection{Existence of the Kolmogorov-type functional}

\begin{proposition}[Existence beyond integrability]
Let $G$ be a probability coordinate chart on $I$ and let $X$ be an arbitrary 
$I$-valued random variable. Then $G(X)$ is bounded and therefore integrable, 
and $\mathbb{E}(G(X))\in(0,1)$ lies in the domain of $G^{-1}$. In particular, 
the probability barycenter $b_G(X)$ of Definition~\ref{def:prob-barycenter} is 
always well-defined, regardless of whether $\mathbb{E}(X)$ exists.
\end{proposition}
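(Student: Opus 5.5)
The plan is to verify three facts in order: measurability and boundedness of $G(X)$, the strict location of its mean inside $(0,1)$, and bijectivity of $G$ onto $(0,1)$, so that $G^{-1}$ can be applied.

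First, $G$ is the restriction of a continuous distribution function to $I$. It is therefore Borel measurable on $I$, and $G(X)$ is a random variable whenever $X$ is $I$-valued, that is, $\Prob(X\in I)=1$. Since $0<G(x)<1$ for all $x\in I$, we get $0<G(X)<1$ almost surely. Hence $|G(X)|\le 1$, and $G(X)$ is integrable with $\mathbb{E}(G(X))\in[0,1]$. No assumption on the law of $X$ enters, so no moment of $X$ is needed.

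The main step is to exclude the endpoints. I would argue that $1-G(X)>0$ almost surely, and that a nonnegative random variable with zero expectation vanishes almost surely. Then $\mathbb{E}(1-G(X))=0$ would force $G(X)=1$ almost surely, contradicting $G(X)<1$ on the full-probability event $\{X\in I\}$. Symmetrically, $\mathbb{E}(G(X))=0$ would force $G(X)=0$ almost surely, which is impossible. So $\mathbb{E}(G(X))\in(0,1)$ strictly. This is the only place needing care, because a naive bound gives only the closed interval $[0,1]$.

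Finally, $G$ is continuous and strictly increasing on $I=(a,b)$, with limits $0$ at $a^+$ and $1$ at $b^-$ by Definition~\ref{def:prob-chart}. The intermediate value theorem then gives $G(I)=(0,1)$, and strict monotonicity gives injectivity. So $G^{-1}:(0,1)\to I$ is a well-defined (continuous, increasing) map. Applying it to $\mathbb{E}(G(X))\in(0,1)$ yields $b_G(X)\in I$, independently of whether $\mathbb{E}(X)$ exists.
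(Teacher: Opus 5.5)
Your proof is correct and takes the route the paper leaves implicit. The paper gives no separate proof; it only notes in the surrounding text that $G(X)\in(0,1)$ is bounded, hence integrable, and that $\mathbb{E}(G(X))\in(0,1)$ lies in the domain of $G^{-1}$. Your explicit exclusion of the endpoints (a strictly positive random variable has strictly positive mean) and your check that $G(I)=(0,1)$ supply exactly the details the paper leaves unstated.
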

\begin{remark}[Why this is not merely an arbitrary transform]\label{rem:not-just-transform}
The quantity
\[
b_G(X):=G^{-1}\bigl(\mathbb{E}(G(X))\bigr)
\]
has the formal shape of a classical Kolmogorov or quasi-arithmetic mean. However, the perspective adopted in this paper is different. The point is not the algebraic template itself, but the interpretation of \(G\) as a probability coordinate chart. In this view, the map \(x\mapsto G(x)\) places observations on a probability scale, and the operation \(\mathbb{E}(G(X))\) performs averaging in probability coordinates rather than in value space.

This distinction is essential. For a general monotone generator, the
transformed variable $g(X)$ is merely a reparametrization of values. By
contrast, here $G$ is required to be the restriction of a continuous
cumulative distribution function. The coordinate $G(x)$ therefore carries
distributional meaning: it encodes the location of $x$ relative to the
probability mass of the chart law. The resulting quantity $b_G(X)$ is
interpreted as a barycenter in probability coordinates, not simply as an
unconstrained transformed average.

In particular, the present framework does not rely on continuity of the law of \(X\), nor on the exact uniformization property \(G(X)\sim \mathrm{Uniform}(0,1)\), which is specific to the continuous self-induced case. Its central idea is more general: probability itself may serve as a coordinate system for averaging, and different admissible charts lead to different, but mathematically meaningful, notions of barycenter.
\end{remark}
\begin{remark}[Extension beyond moment assumptions]
The above construction provides a notion of central tendency for random
variables for which the classical mean may fail to exist. This includes
discrete and continuous heavy-tailed laws whose support is contained in the
domain of the chosen chart.
\end{remark}
\begin{remark}[Orientation invariance]\label{rem:orientation}
Let $G$ be a probability coordinate chart and define $\bar G = 1 - G$.
Then $\bar G$ is strictly monotone decreasing and
\[
b_{\bar G}(X) = b_G(X).
\]
Thus reversing the orientation of probability coordinates does not
affect the probability barycenter. Note that $\bar G$ is decreasing and
therefore lies outside the class of charts of
Definition~\ref{def:prob-chart}; the statement concerns the algebraic
template $G\mapsto G^{-1}(\mathbb{E}(G(X)))$ on a momentarily enlarged
class of generators.
\end{remark}
\begin{proposition}[Affine invariance of probability barycenters]
\label{prop:affine-invariance}
Let $G$ be a probability coordinate chart and let
\[
\tilde G(x) = a\,G(x) + b,
\quad a \neq 0,
\]
be an affine transformation such that $\tilde G$ remains strictly
monotone on the interval of invertibility.

Then, whenever both barycenters are well defined,
\[
b_{\tilde G}(X) = b_G(X).
\]
\end{proposition}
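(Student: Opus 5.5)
The plan is a direct computation from Definition~\ref{def:prob-barycenter}, applied to the algebraic template $G\mapsto G^{-1}(\mathbb E[G(X)])$ for the strictly monotone generator $\tilde G$, in the same enlarged sense as Remark~\ref{rem:orientation}. The key tools are linearity of expectation and an explicit formula for the inverse of an affine map composed with $G$.

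First I record the inverse of $\tilde G$. Since $a\neq 0$, the affine map $u\mapsto au+b$ is a bijection of $\R$. So $\tilde G$ is injective on $I$, with range $\tilde G(I)=aG(I)+b$, which is the open interval with endpoints $b$ and $a+b$. For $v$ in this range, solving $v=aG(x)+b$ gives
\[
\tilde G^{-1}(v)=G^{-1}\!\left(\frac{v-b}{a}\right).
\]

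Next, $G(X)$ is bounded, so $\tilde G(X)=aG(X)+b$ is integrable. Linearity of expectation then gives
\[
\mathbb E[\tilde G(X)]=a\,\mathbb E[G(X)]+b.
\]
By the existence proposition, $\mathbb E[G(X)]\in(0,1)$. Hence $\mathbb E[\tilde G(X)]$ lies in $\tilde G(I)$, the domain of $\tilde G^{-1}$, and $b_{\tilde G}(X)$ is well defined automatically.

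Substituting into the inverse formula gives
\[
b_{\tilde G}(X)=G^{-1}\!\left(\frac{a\,\mathbb E[G(X)]+b-b}{a}\right)=G^{-1}\!\bigl(\mathbb E[G(X)]\bigr)=b_G(X),
\]
which is the claim.

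There is no substantial obstacle. The only point needing care is the domain bookkeeping: checking that $\mathbb E[\tilde G(X)]$ lies in the range of $\tilde G$ when $a<0$, where the range is reversed. Because the range is an interval and the affine map sends $(0,1)$ onto it, the check is immediate. I would also note that $a=-1$, $b=1$ recovers Remark~\ref{rem:orientation} as a special case.
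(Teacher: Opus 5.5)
Your proposal is correct and matches the paper's proof: linearity gives $\mathbb{E}(\tilde G(X))=a\,\mathbb{E}(G(X))+b$, and the inverse formula $\tilde G^{-1}(u)=G^{-1}\bigl((u-b)/a\bigr)$ reduces $b_{\tilde G}(X)$ to $b_G(X)$. Your extra domain check goes slightly beyond the paper's proof. It uses that $\mathbb{E}(G(X))\in(0,1)$ forces $a\,\mathbb{E}(G(X))+b\in\tilde G(I)$ for either sign of $a$, which shows the proviso ``whenever both barycenters are well defined'' holds automatically.
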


\begin{proof}
One has
\[
\E(\tilde G(X)) = a\,\E(G(X)) + b.
\]
Since $\tilde G^{-1}(u) = G^{-1}((u-b)/a)$, it follows that
\[
b_{\tilde G}(X)
= \tilde G^{-1}(\E(\tilde G(X)))
= G^{-1}(\E(G(X)))
= b_G(X).
\]
\end{proof}
\begin{remark}[Geometric interpretation]
The probability barycenter depends only on the affine structure of the
probability scale $(0,1)$ and is invariant under affine reparametrizations
of probability coordinates. This reflects the fact that barycenters are
defined through linear averaging in probability space. As in
Remark~\ref{rem:orientation}, the transformed generator $\tilde G=aG+b$ is
generally not a cumulative distribution function, so
Proposition~\ref{prop:affine-invariance} is a statement about the algebraic
template on an enlarged class of generators; within the class of charts
itself, the barycenter-preserving reparametrizations are characterized in
Proposition~\ref{prop:rigidity} below.
\end{remark}
\begin{remark}[Optimization and geometry]
\label{remark:optimization_vs_geometry}

Generalized means of the form
\[
h^{-1}\!\left(\frac{1}{n}\sum_{i=1}^n h(x_i)\right)
\]
admit a classical interpretation as least squares estimators after
transforming the data via a monotone function $h$; see
Berger and Casella~\cite{BergerCasella1992}.
This perspective is inherently variational, relying on the minimization of a
quadratic loss in transformed coordinates.

The probability barycenter admits the same variational description: by the
geometric characterization above, $b_G(X)$ minimizes
$c\mapsto\mathbb{E}(d_G(X,c)^2)$, which is precisely a quadratic loss in the
coordinates induced by $G$. The two readings therefore describe one and the
same object, not two competing constructions. What distinguishes the present
framework is not the absence of a variational principle, but what constrains
the generator: in the classical variational reading the transformation $h$ is
an arbitrary monotone function chosen alongside a loss, whereas here the
chart $G$ is required to be a cumulative distribution function, so that the
coordinates---and hence the loss---carry probabilistic meaning.

\end{remark}
\begin{proposition}[Optimization representation of generalized means]
\label{prop:optimization_representation}

Let $h:I \to \mathbb{R}$ be a continuous, strictly monotone function,
and let $x_1,\dots,x_n \in I$.
Then the generalized mean
\[
M_h(x_1,\dots,x_n)
= h^{-1}\!\left(\frac{1}{n}\sum_{i=1}^n h(x_i)\right)
\]
is the unique minimizer of the quadratic functional
\[
a \mapsto \sum_{i=1}^n \big(h(x_i) - h(a)\big)^2.
\]

\end{proposition}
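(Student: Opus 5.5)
The plan is to reduce the statement to minimizing a quadratic in the scalar variable $t=h(a)$, and then transfer the unique minimizer back to $a$ using the bijectivity of $h$. First I will fix the objects involved. Since $h$ is continuous and strictly monotone on the interval $I$, its image $J:=h(I)$ is an interval, and $h:I\to J$ is a bijection with continuous inverse $h^{-1}:J\to I$. The average
\[
\bar t:=\frac1n\sum_{i=1}^n h(x_i)
\]
is a convex combination of points $h(x_i)\in J$. Because $J$ is an interval, it is convex, so $\bar t\in J$. Hence $M_h(x_1,\dots,x_n)=h^{-1}(\bar t)$ is a well-defined point of $I$. I regard this step as the only genuine (if minor) obstacle: it is exactly where the interval structure of $I$ and the continuity of $h$ are used, through the intermediate value theorem.

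Next, write the functional as $\Psi(a)=\phi(h(a))$, where $\phi(t)=\sum_{i=1}^n (h(x_i)-t)^2$. Expanding, I obtain the bias--variance identity
\[
\phi(t)=\sum_{i=1}^n\bigl(h(x_i)-\bar t\bigr)^2+n\,(t-\bar t)^2 ,
\]
because the cross term $2(\bar t-t)\sum_i (h(x_i)-\bar t)$ vanishes. Therefore $\phi(t)\ge\phi(\bar t)$ for every $t\in\mathbb R$, and equality holds if and only if $t=\bar t$. This is the same computation as in Proposition~\ref{prop:geometric-characterization}, with the empirical measure in place of the law of $X$.

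Finally, I transfer the minimization back to $I$. For any $a\in I$, the first step gives $\Psi(a)=\phi(h(a))\ge\phi(\bar t)=\Psi(h^{-1}(\bar t))$, so $M_h$ attains the minimum. For uniqueness, suppose $\Psi(a)=\Psi(M_h)$. Then $n\,(h(a)-\bar t)^2=0$, so $h(a)=\bar t$. Injectivity of $h$ then forces $a=h^{-1}(\bar t)=M_h(x_1,\dots,x_n)$.

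The argument does not need differentiability of $h$, and the direction of monotonicity is irrelevant. In particular, the decreasing case covers generators such as $\bar G$ in Remark~\ref{rem:orientation}.
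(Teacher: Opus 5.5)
Your proposal is correct and takes the same route as the paper: minimize the quadratic in $t=h(a)$ at the arithmetic mean of the $h(x_i)$, then pull back through $h^{-1}$. You also make explicit two steps the paper's one-line proof leaves implicit: that the mean lies in $h(I)$, which is an interval because $h$ is continuous on an interval, and uniqueness via the bias--variance identity together with injectivity of $h$.
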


\begin{proof}
The function $a \mapsto \sum_{i=1}^n (h(x_i)-h(a))^2$
is minimized when $h(a)$ equals the arithmetic mean of
$h(x_1),\dots,h(x_n)$, that is,
\[
h(a) = \frac{1}{n}\sum_{i=1}^n h(x_i).
\]
Since $h$ is strictly monotone, it is invertible, and the result follows.
\end{proof}

\begin{remark}[Two readings of one object]
\label{rem:two_readings}

The preceding results identify two equivalent descriptions of the same
notion of central tendency:

\begin{enumerate}
\item[(i)] (\emph{Variational reading})  
Given a monotone transformation $h$, the generalized mean
\[
M_h(x_1,\dots,x_n)
= h^{-1}\!\left(\frac{1}{n}\sum_{i=1}^n h(x_i)\right)
\]
is the minimizer of a quadratic loss in transformed coordinates
(Proposition~\ref{prop:optimization_representation}).

\item[(ii)] (\emph{Geometric reading})  
Given a cumulative distribution function $G$, the probability barycenter
\[
b_G(X) = G^{-1}\big(\mathbb{E}(G(X))\big)
\]
transports the distribution to probability space, averages linearly there,
and pulls back to value space; it is simultaneously the minimizer of
$\mathbb{E}(d_G(X,\cdot)^2)$.
\end{enumerate}

For $h=G$ the two constructions coincide. The contribution of the geometric
reading is thus not a new object but an explanation and a constraint: it
identifies the quadratic loss of (i) as the pullback of the flat geometry of
the probability scale, and it restricts the admissible generators to
cumulative distribution functions, for which the coordinates carry
distributional meaning. In particular, the choice of generator ceases to be
an unconstrained modelling degree of freedom and becomes the choice of a
probability geometry.
\end{remark}
\begin{proposition}[Rigidity of barycenter-preserving transformations]
\label{prop:rigidity}
Let $G$ be a probability coordinate chart, and let
\[
\widetilde G = T\circ G,
\]
where $T$ is continuous and strictly monotone on $(0,1)$.
Assume that for every admissible random variable $X$,
\[
b_{\widetilde G}(X)=b_G(X).
\]
Then $T$ is affine:
\[
T(u)=au+b,\qquad a\neq 0.
\]
\end{proposition}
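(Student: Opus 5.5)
The plan is to reduce the statement to the classical fact that a continuous function preserving all two-point arithmetic midpoints is affine (Jensen's functional equation). First I rewrite the hypothesis in probability coordinates. Since $\widetilde G=T\circ G$ with $T$ continuous and strictly monotone, $\widetilde G^{-1}=G^{-1}\circ T^{-1}$ on the range of $T$. Hence
\[
b_{\widetilde G}(X)=G^{-1}\!\bigl(T^{-1}(\mathbb E[T(G(X))])\bigr),
\]
as in the remark following Proposition~\ref{prop:passive-covariance}. Because $G^{-1}$ is injective, the hypothesis $b_{\widetilde G}(X)=b_G(X)$ becomes
\[
T^{-1}\bigl(\mathbb E[T(U)]\bigr)=\mathbb E[U],\qquad U:=G(X),
\]
that is, $\mathbb E[T(U)]=T(\mathbb E[U])$ for every law of $U$ arising from an admissible $X$.

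Next I specialise to two-point laws. For any $u,v\in(0,1)$, set $x=G^{-1}(u)$ and $y=G^{-1}(v)$, which lie in $I$. Let $X$ take the values $x$ and $y$ with probability $1/2$ each. This $X$ is $I$-valued and hence admissible, since the scope remark allows discrete laws. Then $U$ is uniform on $\{u,v\}$, and the identity becomes Jensen's equation
\[
T\!\left(\frac{u+v}{2}\right)=\frac{T(u)+T(v)}{2},\qquad u,v\in(0,1).
\]

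Then I solve this equation. Iterating it gives $T(\lambda u+(1-\lambda)v)=\lambda T(u)+(1-\lambda)T(v)$ for all dyadic $\lambda\in[0,1]$, by induction on the dyadic denominator. Continuity of $T$ then extends this to every $\lambda\in[0,1]$, so $T$ is both convex and concave on $(0,1)$, and hence affine. Alternatively, I could use general two-point laws with weights $p,1-p$, which give $T(pu+(1-p)v)=pT(u)+(1-p)T(v)$ directly and bypass the dyadic step. Writing $T(u)=au+b$, strict monotonicity forces $a\neq0$.

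I expect the main obstacle to be bookkeeping rather than analysis. I must check that $T^{-1}$ is applied only to points in the range of $T$, which holds because $\mathbb E[T(U)]$ lies in the convex hull of $T$'s values on the support, and $T((0,1))$ is an interval. I must also confirm that two-point discrete laws count as "admissible random variables"; the paper's scope remark and Proposition~\ref{prop:optimization_representation}, which concerns finite samples, support this. If only continuous laws were allowed, I would approximate the two-point law by continuous laws concentrated near $x$ and $y$ and pass to the limit using continuity and boundedness of $T$ on compact subintervals, via dominated convergence.
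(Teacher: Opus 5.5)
Your proposal is correct and follows the paper's proof: you use $(T\circ G)^{-1}=G^{-1}\circ T^{-1}$ to rewrite the hypothesis as $\mathbb{E}[T(U)]=T(\mathbb{E}[U])$, then test it on two-point laws, so that the continuous $T$ is both convex and concave and hence affine, with $a\neq 0$ coming from strict monotonicity. Your midpoint-plus-dyadic argument is a more hands-on version of the paper's general-weight step (which you also note), and your check that $T^{-1}$ is evaluated only inside the range of $T$ makes explicit a point the paper leaves implicit.
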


\begin{proof}
Since $(T\circ G)^{-1}=G^{-1}\circ T^{-1}$, the identity
$b_{\widetilde G}(X)=b_G(X)$ is equivalent to
\[
T^{-1}(\E(T(U)))=\E(U)
\]
for all random variables $U$ taking values in $(0,1)$, i.e.
\[
\E(T(U))=T(\E(U)).
\]
Taking $U$ with two-point distributions yields
\[
T(pu+(1-p)v)=pT(u)+(1-p)T(v),
\]
for all $u,v\in(0,1)$ and $p\in[0,1]$; being continuous and both convex
and concave, $T$ is affine (see, e.g., \cite{Rockafellar1970}).
\end{proof}

\begin{corollary}[Rigidity within normalized probability charts]
\label{cor:chart-rigidity}
Let $G,H:I\to(0,1)$ be probability coordinate charts. If
\[
b_H(X)=b_G(X)
\]
for every $I$-valued random variable $X$, then $H=G$.
\end{corollary}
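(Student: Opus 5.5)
The plan is to reduce the corollary to Proposition~\ref{prop:rigidity} and then use the boundary normalization of probability charts, in the same way the proof of Proposition~\ref{prop:metric-rigidity} does.

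First, set $T:=H\circ G^{-1}:(0,1)\to(0,1)$, exactly as in Proposition~\ref{prop:passive-active}. Since $G$ and $H$ are continuous, strictly increasing bijections of $I$ onto $(0,1)$, the map $T$ is a continuous, strictly increasing bijection of $(0,1)$ onto itself. Moreover $H=T\circ G$.

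Second, the hypothesis $b_H(X)=b_G(X)$ for every $I$-valued $X$ is exactly the assumption of Proposition~\ref{prop:rigidity} with $\widetilde G=H$. That proposition only uses random variables $U=G(X)$ with two-point laws on $(0,1)$, and these are realized by two-point $X$ on $I$ because $G$ is a bijection. We conclude that $T(u)=au+b$ with $a\neq0$. Because $T$ is increasing, $a>0$.

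Third, we pin down the constants through the boundary behaviour. As $u\to0^+$, we have $G^{-1}(u)\to a_I^+$, the left endpoint of $I$, so $T(u)=H(G^{-1}(u))\to0$ by Definition~\ref{def:prob-chart}. Likewise, as $u\to1^-$, $T(u)\to1$. From the affine form this gives $b=0$ and $a+b=1$, hence $T=\mathrm{id}$ and $H=G$.

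The only delicate point is the limit argument: we must make sure that $G^{-1}(u)$ tends to the correct endpoint of $I$. This follows from the monotonicity of $G$ and its limits at the endpoints, since a strictly increasing bijection maps one-sided neighbourhoods of the endpoints to one another. Everything else is a direct invocation of earlier results.
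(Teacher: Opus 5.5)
Your proposal is correct and follows the paper's proof: set $T=H\circ G^{-1}$, invoke Proposition~\ref{prop:rigidity} to get $T$ affine, and then use that $T$ is an increasing bijection of $(0,1)$ with the endpoint normalizations to force $T=\mathrm{id}$. Your two added checks make explicit what the paper leaves implicit: that two-point laws of $U$ are realized by two-point $X$ on $I$, and that $G^{-1}(u)$ tends to the correct endpoints of $I$.
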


\begin{proof}
Apply Proposition~\ref{prop:rigidity} with $T=H\circ G^{-1}$. Since both $G$
and $H$ are increasing bijections onto $(0,1)$, the affine map $T$ must map
$(0,1)$ increasingly onto itself and satisfy the endpoint normalizations.
Hence $T(u)=u$ and $H=G$.
\end{proof}

\begin{remark}
The chart dependence of the barycenter is therefore rigid rather than
accidental. Apart from a passive affine relabelling on an enlarged generator
class, two distinct normalized probability charts cannot define the same
barycenter for every law.
\end{remark}

\subsubsection{Kolmogorov equivalence}

\begin{definition}[Kolmogorov equivalence]
Let $G$ be a probability coordinate chart. Two random variables 
$X$ and $Y$ are said to be \emph{Kolmogorov-equivalent (with respect to $G$)} if
\[
\mathbb{E}(G(X)) = \mathbb{E}(G(Y)).
\]
\end{definition}

\begin{proposition}[Invariance of the Kolmogorov representative]
Let $X$ and $Y$ be Kolmogorov-equivalent with respect to a chart $G$. Then
\[
b_G(X) = b_G(Y).
\]
\end{proposition}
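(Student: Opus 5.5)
The statement follows directly from Definition~\ref{def:prob-barycenter}, once we know that both barycenters are actually defined. The plan is first to confirm well-definedness and then to substitute the Kolmogorov-equivalence identity.

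\textbf{Step 1: well-definedness.} Since $X$ and $Y$ are $I$-valued, the proposition on existence beyond integrability applies to each. So $G(X)$ and $G(Y)$ are bounded, hence integrable. Moreover, $\mathbb{E}(G(X))$ and $\mathbb{E}(G(Y))$ both lie in $(0,1)$, which is the domain of $G^{-1}$. Thus $b_G(X)$ and $b_G(Y)$ exist, with no integrability assumption on $X$ or $Y$ themselves.

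\textbf{Step 2: substitution.} Write $u_X=\mathbb{E}(G(X))$ and $u_Y=\mathbb{E}(G(Y))$. Kolmogorov equivalence says exactly that $u_X=u_Y$. By Definition~\ref{def:prob-chart}, $G$ is a strictly increasing bijection $I\to(0,1)$, so $G^{-1}$ is a genuine single-valued function. Therefore
\[
b_G(X)=G^{-1}(u_X)=G^{-1}(u_Y)=b_G(Y).
\]

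\textbf{Step 3: geometric reading (optional).} One can also phrase this through Proposition~\ref{prop:geometric-characterization}. Expanding $\mathbb{E}\bigl((G(X)-G(c))^2\bigr)=\Var(G(X))+(u_X-G(c))^2$ shows that the objective for $X$ and the objective for $Y$ differ only by the additive constant $\Var(G(X))-\Var(G(Y))$. Hence the two objectives have the same unique minimizer $c=G^{-1}(u_X)$. This highlights that the barycenter depends on the law only through the coordinate average, while the dispersions of $G(X)$ and $G(Y)$ may differ.

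There is no real obstacle here. The only point needing care is Step~1: the common value $u_X=u_Y$ must lie strictly inside $(0,1)$ and not at an endpoint. This holds because $G(X)\in(0,1)$ almost surely, so its expectation lies strictly between $0$ and $1$.
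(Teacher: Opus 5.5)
Your proof is correct and follows the same route as the paper: both barycenters are obtained by applying the single-valued map $G^{-1}$ to the common value $\mathbb{E}(G(X))=\mathbb{E}(G(Y))\in(0,1)$. Your Step~1 states explicitly the well-definedness that the paper leaves implicit, and the variational reading in Step~3 is valid but not needed.
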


\begin{proof}
This follows immediately from the definition, since both quantities are obtained 
by applying $G^{-1}$ to the same value $\mathbb{E}(G(X)) = \mathbb{E}(G(Y))$.
\end{proof}

\begin{remark}[Interpretation]
Kolmogorov equivalence identifies random variables that share the same barycenter 
in probability coordinates. The functional $b_G$ thus selects a representative of 
each equivalence class by transporting the linear average in coordinate space 
back to the original domain.
\end{remark}

\subsubsection{The canonical case}

\begin{remark}[Self-generated coordinates]
The Kolmogorov equivalence is a structural feature of the construction and does 
not depend on the relation between $G$ and the law of $X$.

A distinguished situation arises when
\[
G = F_X,
\]
that is, when the coordinate chart is generated by the distribution of $X$ itself. 
In this case, the construction becomes canonical with respect to $X$, and the 
associated Kolmogorov representative coincides with the median.
\end{remark}

\subsubsection{On invertibility and coordinate patches}

Although a cumulative distribution function is monotone non-decreasing, it may 
fail to be strictly increasing on its entire domain. Consequently, the inverse 
$G^{-1}$ may not be globally well-defined.

To address this issue, we restrict $G$ to intervals on which it is strictly 
increasing. On such intervals, $G$ admits a well-defined inverse and the 
Kolmogorov-type construction can be carried out without ambiguity.

\begin{remark}[Geometric interpretation]
The restriction to regions of strict increase can be interpreted as the passage 
to admissible coordinate patches. Flat regions of the CDF correspond to 
singularities of the coordinate system, where local inversion is not possible.
\end{remark}

\subsection{Relation to Kolmogorov-type transformations}\label{sec:K-relation}

The construction of probability coordinates is closely related to the class of
monotone transformations that appear in the theory of Kolmogorov
means~\cite{Kolmogorov1930,Nagumo1930,Aczel1966}. However, the role of this connection is interpretative rather than definitional:
the Kolmogorov form arises here as a consequence of averaging in
probability coordinates, rather than as a starting point. Recall that
Kolmogorov means were originally defined as aggregation operators
for finite samples. Given a strictly monotone function $g$, the
Kolmogorov mean of real numbers $x_{1},\ldots,x_{n}$ is
\[
M_{g}(x_{1},\ldots,x_{n}) = g^{-1}\!\left( \frac{1}{n}\sum_{i=1}^{n} g(x_{i}) \right).
\]
When the argument is a random variable $X$, the corresponding
population-level functional
\[
b_g(X) = g^{-1}\!\bigl(\E[g(X)]\bigr)
\]
is no longer a mean in the original Kolmogorov sense, but an
expectation-like operator. This interpretation was articulated explicitly
by de~Carvalho~\cite{deCarvalho2016}. From an algebraic point of view,
the probability barycenter defined in Section~\ref{sec:barycenters}
coincides with a Kolmogorov expected value generated by the function $G$.
This formal similarity, however, obscures a fundamental conceptual
distinction.

In the classical theory of generalized means, the generating function $g$
is introduced abstractly and is typically chosen to satisfy axiomatic or
functional properties, such as continuity, monotonicity, or invariance
under aggregation. The choice of $g$ is largely independent of the
probabilistic structure of the random variable being averaged. As a
result, Kolmogorov means are often interpreted as alternative
averaging rules competing with classical expectation.

In contrast, in the present framework the generating function is not
arbitrary. Probability coordinate charts are required to be cumulative
distribution functions, and thus encode the distribution of probability
mass itself. The mapping $x \mapsto G(x)$ is not merely a change of scale. Once the
Euclidean metric of $(0,1)$ is pulled back, it generates the distance $d_G$ on
value space. Replacing $G$ by another chart while holding $X$ fixed therefore
changes the metric under which centrality is evaluated, rather than merely
rewriting one fixed metric in new coordinates; see
Section~\ref{sec:chart-generated-geometry}.

From this perspective, Kolmogorov means do not represent
competing notions of expectation. Rather, they arise as
\emph{coordinate-induced barycenters obtained by averaging in probability coordinates}. The Kolmogorov expectation associated with a chart $G$ is the pullback,
under $G^{-1}$, of the Euclidean barycenter of the probability
coordinates $G(X)$. The existence of such expectations follows from the
boundedness of probability coordinates (whose closure is compact), rather
than from ad hoc choices of generating functions. Stability after pulling
the coordinate average back to value space additionally depends on the
local regularity of $G^{-1}$.

This interpretation provides a conceptual explanation for earlier results
on the existence of Kolmogorov expectations under bounded generators.
Such generators are those that induce probability geometries in
which averaging is well defined. The present framework isolates this
geometric content and places it at the foundation of the theory.

\subsection{Intrinsic and benchmark geometries}\label{sec:intrinsic-benchmark}

The choice of a probability coordinate chart determines the geometry in
which centrality is defined. Two cases play a distinguished role
throughout the paper.

\paragraph{Intrinsic probability geometry.}
When the probability coordinate chart is chosen as the cumulative
distribution function of $X$, that is $G = F_{X}$, the resulting
probability geometry is intrinsic to the distribution. In this case, the
probability coordinates $F_{X}(X)$ are uniformly distributed on $(0,1)$,
and the geometry depends only on the internal structure of the
distribution rather than on an external reference. As shown in
Section~\ref{sec:barycenters}, the corresponding probability barycenter
coincides with the median of $X$. In this sense, the median arises as
the expectation associated with the intrinsic probability geometry of a
distribution.

\paragraph{Benchmark probability geometry.}
More generally, one may choose a probability coordinate chart $G$
independently of the distribution of $X$. Typical choices include
Gaussian, logistic, or other reference distribution functions. Such a
choice induces a benchmark geometry relative to which centrality and
dispersion are measured. The resulting probability barycenter quantifies
the location of $X$ relative to the benchmark probability structure
encoded by $G$.

The distinction between intrinsic and benchmark geometries is not a choice
between alternative coordinate descriptions of one fixed metric. Intrinsic
geometry is generated by the law of $X$ itself, whereas benchmark geometry
introduces an external probabilistic ruler. In both cases the law of $X$ is
kept fixed, but different charts induce different metrics on its support and
therefore different notions of centrality. This is precisely the active
chart-replacement operation of Proposition~\ref{prop:passive-active}, not the
passive covariance operation of Proposition~\ref{prop:passive-covariance}.

This flexibility is a defining feature of probability geometry. Rather
than imposing a single, canonical geometry on value space---as classical
expectation does via the Euclidean metric---probability geometry allows
the geometry to be chosen explicitly through probability coordinates.
Asymmetry, extremal phenomena, and tail mismatch with a benchmark are then
expressed as geometric features relative to the chosen coordinate system.

\begin{figure}[H]
\centering
\resizebox{\textwidth}{!}{%
\begin{tikzpicture}[
    >=Latex,
    every node/.style={font=\small},
    axis/.style={->, thick},
    map/.style={->, thick},
    point/.style={circle, fill=black, inner sep=1.6pt},
    chartone/.style={blue!70!black, thick},
    charttwo/.style={purple!80!black, thick},
    chartthree/.style={orange!85!black, thick}
]

\node[font=\bfseries\large] at (0,4.2) {Value space};
\node at (0,3.8) {one fixed random variable \(X\)};

\draw[axis] (-3.6,0) -- (3.6,0) node[right] {$x$};

\draw[black, thick, smooth, domain=-3.2:3.2, samples=150]
plot (\x,{2.25/(1+2.5*\x*\x)});

\node at (-2.45,1.15) {$f_X(x)$};

\foreach \x in {-3.0,-2.7,-2.4,-2.0,-1.5,-0.9,-0.4,-0.15,0.15,0.45,0.9,1.4,1.9,2.35,2.75,3.05}
    \node[circle, fill=black, inner sep=1pt] at (\x,0.06) {};

\node[align=center] at (0,-0.85)
{Same distribution,\\ same observations.};

\node[point, fill=blue!70!black] (egone) at (-0.75,-1.55) {};
\node[point, fill=purple!80!black] (egtwo) at (0.30,-1.55) {};
\node[point, fill=orange!85!black] (egthree) at (1.25,-1.55) {};

\node[below] at (-1.2,-1.79) {$b_{G_1}(X)$};
\node[below] at (0,-1.79) {$b_{G_2}(X)$};
\node[below] at (1.25,-1.79) {$b_{G_3}(X)$};

\draw[dashed, blue!70!black] (-0.75,-1.55) -- (-0.75,0);
\draw[dashed, purple!80!black] (0.30,-1.55) -- (0.30,0);
\draw[dashed, orange!85!black] (1.25,-1.55) -- (1.25,0);

\node[align=center] at (0,-2.85)
{Different benchmark charts produce\\
different probability barycenters in value space.};

\node[font=\bfseries\large] at (8,4.2) {Benchmark probability charts};
\node at (8,3.8) {different choices of \(G\)};

\draw[thick] (5.2,2.35) -- (10.8,2.35);
\draw[thick] (5.2,1.05) -- (10.8,1.05);
\draw[thick] (5.2,-0.25) -- (10.8,-0.25);

\node[left] at (5.2,2.35) {$0$};
\node[right] at (10.8,2.35) {$1$};
\node[left] at (5.2,1.05) {$0$};
\node[right] at (10.8,1.05) {$1$};
\node[left] at (5.2,-0.25) {$0$};
\node[right] at (10.8,-0.25) {$1$};

\node[point, fill=blue!70!black] (uone) at (7.05,2.35) {};
\node[point, fill=purple!80!black] (utwo) at (8.05,1.05) {};
\node[point, fill=orange!85!black] (uthree) at (8.95,-0.25) {};

\node[above, blue!70!black] at (7.05,2.48) {$\bar u_1=\mathbb{E}(G_1(X))$};
\node[above, purple!80!black] at (8.05,1.18) {$\bar u_2=\mathbb{E}(G_2(X))$};
\node[above, orange!85!black] at (8.95,-0.12) {$\bar u_3=\mathbb{E}(G_3(X))$};

\foreach \u in {5.35,5.55,5.85,6.20,6.70,7.15,7.55,8.10,8.75,9.25,9.80,10.30}
    \node[circle, fill=blue!70!black, inner sep=0.9pt] at (\u,2.43) {};

\foreach \u in {5.45,5.85,6.15,6.65,7.25,7.85,8.20,8.55,9.00,9.55,10.05,10.45}
    \node[circle, fill=purple!80!black, inner sep=0.9pt] at (\u,1.13) {};

\foreach \u in {5.30,5.75,6.35,6.95,7.55,8.10,8.75,9.20,9.65,10.10,10.45}
    \node[circle, fill=orange!85!black, inner sep=0.9pt] at (\u,-0.17) {};

\node[align=center] at (9,-2.65)
{Each chart induces its own coordinate representation,\\
coordinate barycenter, and pullback.};

\draw[map, blue!70!black] (2.7,1.25) .. controls (4.0,2.4) and (4.8,2.75) .. 
node[above] {$G_1$} (5.2,2.45);

\draw[map, purple!80!black] (2.7,0.85) .. controls (4.0,1.5) and (4.8,1.25) .. 
node[above] {$G_2$} (5.2,1.15);

\draw[map, orange!85!black] (2.7,0.45) .. controls (4.0,0.3) and (4.8,0.05) .. 
node[above] {$G_3$} (5.2,-0.15);

\draw[map, blue!70!black] (7.05,2.15) .. controls (5.4,-2.1) and (2.1,-2.0) ..
node[below] {$G_1^{-1}$} (egone);

\draw[map, purple!80!black] (8.05,0.85) .. controls (5.6,-2.45) and (2.2,-2.25) ..
node[below] {$G_2^{-1}$} (egtwo);

\draw[map, orange!85!black] (8.95,-0.45) .. controls (6.2,-2.75) and (2.6,-2.55) ..
node[below] {$G_3^{-1}$} (egthree);

\end{tikzpicture}
}

\caption{One fixed law under different chart-generated geometries. The
random variable $X$, its observations, and its law are held fixed, while the
benchmark chart is changed from $G_1$ to $G_2$ or $G_3$. Each chart resets the
Euclidean metric on its own probability scale, inducing a different metric
$d_{G_j}$ on value space, a different coordinate barycenter
$\bar u_j=\mathbb E[G_j(X)]$, and a different Fr\'echet centre
$b_{G_j}(X)=G_j^{-1}(\bar u_j)$. The differences are therefore geometric and
not merely alternative coordinate descriptions of one fixed centre.}
\label{fig:benchmark-coordinate-charts}
\end{figure}
\begin{remark}[Chart-indexed centrality]
The probability barycenter depends on the chart because it is the Fr\'echet
centre for the chart-generated metric $d_G$. Different admissible charts may
therefore produce different barycenters for the same fixed random variable
$X$. This is not a failure of coordinate invariance: coordinate invariance is
expressed by Proposition~\ref{prop:passive-covariance}, where the data are
transported together with the chart. Here the data remain fixed while the
metric changes.
\end{remark}
\begin{remark}[Comparison with escort distributions] Escort distributions, introduced in the context of generalized entropy frameworks 
(see, e.g., Tsallis~\cite{Tsallis1988} and Plastino and Plastino~\cite{Plastino1993}), 
modify a given probability density by reweighting its mass according to a nonlinear transformation, typically of the form
\[
f_q(x) = \frac{f(x)^q}{\int f(x)^q \, dx}.
\]
This construction alters the underlying probability measure, changing the relative importance of observations.

In contrast, the present framework does not modify the probability measure. 
Instead, it introduces a change of representation through probability coordinate charts $G$, mapping observations into the unit interval via $G(X)$. 
Averaging is then performed in probability coordinates and pulled back to value space.

Thus, escort distributions and probability-coordinate barycenters act on different structural levels:
escort distributions deform the measure, while probability coordinate charts deform the geometry.

In particular, while escort distributions suppress or amplify tail contributions through reweighting,
probability coordinates represent extreme observations near the boundary of $(0,1)$.
This bounded representation limits their direct contribution to coordinate
averages; robustness of the pullback barycenter requires additional regularity
of the inverse chart and is not asserted here solely from boundedness.
A more systematic comparison between geometric deformation (via probability coordinate charts) and measure deformation (via escort distributions), as well as their potential interactions, will be developed in subsequent work.
\end{remark}
\subsection{Outlook}

The next section consolidates the barycentric construction introduced above
and develops its probabilistic consequences. For every law supported on the
domain of an admissible chart, the coordinate expectation exists without a
moment assumption on $X$ and can be pulled back to value space.

\section{Probability barycenters}\label{sec:barycenters}
Let $G$ be a probability coordinate chart and let $I \subset \mathbb{R}$
be an interval on which $G$ is continuous and strictly increasing with
$G(I) = (0,1)$.

\subsection{Barycenters in probability coordinates}

Let $X$ be a real-valued random variable such that $\Prob(X \in I)=1$.

Since $G(X)$ takes values in $(0,1)$, it is bounded and therefore
integrable. This allows us to define the barycenter of $X$ in
probability coordinates as
\[
\mathbb{E}(G(X)).
\]

\subsubsection{Pullback to the original space}

Recall from Definition~\ref{def:prob-barycenter} that the
\textbf{probability barycenter} of $X$ with respect to the chart $G$ is
\begin{equation}\label{eq:barycenter}
b_G(X) = G^{-1}\!\bigl( \mathbb{E}(G(X)) \bigr),
\end{equation}
which is always well-defined since $\mathbb{E}(G(X))\in(0,1)$.

This construction consists of three steps:
\begin{enumerate}
\item mapping $X$ into probability coordinates via $G$;
\item averaging in the bounded space $(0,1)$;
\item transporting the result back to the original space via $G^{-1}$.
\end{enumerate}

\subsubsection{Existence without moment assumptions}

The definition of $b_G(X)$ does not require the existence of the
classical mean $\mathbb{E}(X)$. Indeed, the quantity $\mathbb{E}(G(X))$
is always well-defined since $G(X)$ is bounded.

This shows that the probability barycenter provides a notion of central
tendency for random variables with heavy tails, for which classical
moment-based definitions may fail.

\subsubsection{Relation to Kolmogorov equivalence}

Let $X$ and $Y$ be two random variables such that $\Prob(X \in I)=1$ and
$\Prob(Y \in I)=1$. If
\[
\mathbb{E}(G(X)) = \mathbb{E}(G(Y)),
\]
then
\[
b_G(X) = b_G(Y).
\]

Thus, the probability barycenter selects a representative of
each Kolmogorov equivalence class.

\subsubsection{The canonical case}

A distinguished case arises when $G = F_X$ and $F_X$ is continuous. As
established in Proposition~\ref{prop:canonical-chart-uniformization},
$F_X(X)\sim\Unif(0,1)$, so that
\[
b_{F_X}(X) = F_X^{-1}\left(\frac{1}{2}\right),
\]
which is the median of $X$: the intrinsic probability barycenter is the
median.

\subsubsection{Interpretation}

The probability barycenter $b_G(X)$ can be interpreted as a barycenter obtained by averaging in probability coordinates induced by the chart $G$. Unlike classical expectation,
which averages in value space, this construction performs averaging in
probability space, where all laws supported on the chart domain are
represented on a common bounded interval.

This viewpoint is particularly well-suited for heavy-tailed
distributions, where classical Euclidean averaging may fail to capture
the intrinsic structure of the data.

\medskip

\noindent
A natural question concerns the role and selection of the coordinate chart $G$.

\paragraph{On the choice of the probability coordinate chart.}
A central aspect of the framework is the choice of the cumulative distribution function $G$ acting as a probability coordinate chart. This choice is not canonical in general and should be regarded as a modeling decision that determines how probability mass is represented through coordinates.

In practice, the selection of $G$ may be guided by structural features of the distribution of $X$, such as symmetry, tail behavior, or regions of interest. In the distinguished case $G = F_X$, the distribution function of $X$, the representation becomes intrinsic: the random variable is mapped to the uniform distribution on $(0,1)$, and the associated barycenter can be interpreted as a canonical probability-coordinate representative. This holds only for continuous random variables; the discrete and mixed case requires a generalized-quantile extension of the intrinsic chart and is developed in companion work.

More generally, different choices of $G$ generate different metrics on the
fixed support of $X$. The resulting barycenters should therefore be understood
as chart-dependent Fr\'echet centres: each is absolute within its selected
probability geometry, while comparison across charts is a comparison across
geometries rather than across notations.
\section{Limit theorems in probability geometry}\label{sec:limit-thms}
In this section we study the asymptotic behavior of probability
coordinates and of the associated probability barycenters introduced
in Section~\ref{sec:barycenters}. The key observation is that classical limit
theorems operate naturally in probability space, independently of the
tail behavior of the underlying random variable. Two clarifications frame
what follows. First, the limit theorems below concern the transformed
functionals: they do not alter the behavior of the sample mean of $X$.
For example, when $\E|X|=\infty$ the standard strong law for the sample
mean is unavailable, and for a Cauchy law the sample mean has the same
Cauchy distribution for every sample size and therefore does not converge
to a finite deterministic centre.
Second, their content for bounded variables is classical; what the
framework adds is the identification of the limiting objects as
probability barycenters and the geometric reading of the assumptions.
The failure of classical expectation in heavy-tailed settings then appears
as a property of averaging in value space, rather than an obstruction to
probabilistic convergence for suitably represented functionals.
\subsection{Law of large numbers in probability coordinates}

Let $G$ be a probability coordinate chart and let $I \subset \mathbb{R}$
be an interval on which $G$ is continuous and strictly increasing with
$G(I)=(0,1)$.

Let $(X_n)_{n\geq1}$ be a sequence of independent and identically
distributed random variables such that $\Prob(X_1 \in I)=1$.

Define
\[
U_n := G(X_n).
\]

Since $U_n \in (0,1)$ almost surely, the variables $(U_n)$ are bounded
and therefore integrable.

\begin{theorem}[Law of large numbers in probability coordinates]
One has
\[
\frac{1}{n}\sum_{i=1}^n U_i \;\xrightarrow{a.s.}\; \mathbb{E}(U_1)
= \mathbb{E}(G(X_1)).
\]
\end{theorem}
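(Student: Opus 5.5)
The plan is to reduce the statement to Kolmogorov's strong law of large numbers for i.i.d.\ integrable real random variables. First I will check that the transformed sequence $(U_n)_{n\ge1}=(G(X_n))_{n\ge1}$ is again i.i.d. Since $G$ is the restriction of a continuous distribution function to $I$, it is Borel measurable. Composition with a fixed measurable map preserves independence, because $\sigma(U_n)\subseteq\sigma(X_n)$. It also preserves identical distribution, since the law of $U_n$ is the pushforward $G_{\#}P_{X_n}=G_{\#}P_{X_1}$. The condition $\Prob(X_1\in I)=1$ guarantees that $G(X_n)$ is defined almost surely and takes values in $(0,1)$. Modifying each $X_n$ on a null set does not affect almost sure statements.

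Next I will verify integrability. We have $0<U_1<1$ almost surely, so $\E|U_1|\le 1<\infty$. This is exactly the boundedness observation of Section~\ref{sec:barycenters}, and it requires no assumption on $\E|X_1|$. The strong law then gives $\frac1n\sum_{i=1}^n U_i\to\E(U_1)$ almost surely. Finally, $\E(U_1)=\E(G(X_1))$ holds by definition. The limit lies in $(0,1)$, because $U_1\in(0,1)$ almost surely forces the strict inequalities $0<\E(U_1)<1$.

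I do not expect a genuine obstacle. The only point needing care is the measurability and i.i.d.\ transfer in the first step, which is standard. If a more self-contained argument is preferred, boundedness allows an elementary route. One can bound the fourth central moment by $\E\bigl(\sum_{i=1}^n (U_i-\E U_1)\bigr)^4\le 3n^2$, using $|U_i-\E U_1|\le1$. Combining Markov's inequality with Borel--Cantelli then yields almost sure convergence without invoking Kolmogorov's full theorem. I would also note the interpretive remark: the averaging is performed in probability coordinates, so no tail condition on $X_1$ enters.
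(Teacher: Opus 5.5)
Your proposal is correct and takes the same route as the paper: both apply the classical strong law of large numbers to the bounded i.i.d.\ variables $U_i=G(X_i)$, and your checks of measurability, the i.i.d.\ transfer and integrability spell out what the paper leaves implicit. Your optional fourth-moment and Borel--Cantelli argument is also valid, since $\E\bigl(\sum_{i=1}^n (U_i-\E U_1)\bigr)^4\le n+3n(n-1)\le 3n^2$, but it is not needed.
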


\begin{proof}
This follows from the classical strong law of large numbers applied to
the bounded random variables $U_i = G(X_i)$; see, e.g., \cite{Billingsley1995,Kallenberg2002}.
\end{proof}
This convergence in probability coordinates will be transported to value space in the next result.

\subsection{Convergence of barycenters}
\begin{theorem}[Convergence of probability barycenters]
Under the assumptions above, suppose that $G^{-1}$ is continuous at
$\mathbb{E}(G(X_1))$. Then
\[
G^{-1}\left(\frac{1}{n}\sum_{i=1}^n G(X_i)\right)
\;\xrightarrow{a.s.}\;
b_G(X_1).
\]
\end{theorem}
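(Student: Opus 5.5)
The plan is to transport the almost-sure convergence of the preceding theorem through $G^{-1}$ by the continuous mapping principle, applied pathwise. Set
\[
\bar U_n:=\frac1n\sum_{i=1}^n G(X_i),\qquad \mu:=\mathbb E(G(X_1))\in(0,1).
\]
By the law of large numbers in probability coordinates there is an event $\Omega_0$ with $\Prob(\Omega_0)=1$ on which $\bar U_n(\omega)\to\mu$.

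First I would check that the pulled-back statistic is well defined. Since each $G(X_i)\in(0,1)$ almost surely, the convex combination $\bar U_n$ lies in $(0,1)$, the domain of $G^{-1}$. Shrinking $\Omega_0$ by a null set if necessary, $G^{-1}(\bar U_n(\omega))$ is therefore defined for every $n$ and every $\omega\in\Omega_0$. I would also note that $\mu\in(0,1)$, as already observed in the existence proposition, so $b_G(X_1)=G^{-1}(\mu)$ makes sense.

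Next, fix $\omega\in\Omega_0$ and $\varepsilon>0$. Continuity of $G^{-1}$ at $\mu$ gives a $\delta>0$ such that $|u-\mu|<\delta$ with $u\in(0,1)$ implies
\[
|G^{-1}(u)-G^{-1}(\mu)|<\varepsilon .
\]
Because $\bar U_n(\omega)\to\mu$, there is an $N$ with $|\bar U_n(\omega)-\mu|<\delta$ for all $n\ge N$. Hence $G^{-1}(\bar U_n(\omega))\to G^{-1}(\mu)=b_G(X_1)$ for every $\omega\in\Omega_0$, which is the claimed almost-sure convergence. Measurability of $G^{-1}(\bar U_n)$ follows from monotonicity, and hence Borel measurability, of $G^{-1}$.

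I expect no real obstacle here. The only point needing care is that continuity is assumed only at the single point $\mu$, so the argument must be the pointwise $\varepsilon$–$\delta$ one rather than an appeal to global continuity. Since $G$ is a continuous strictly increasing bijection of $I$ onto $(0,1)$, its inverse is in fact automatically continuous on $(0,1)$, and I would remark that the hypothesis is therefore always satisfied.
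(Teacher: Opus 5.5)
Your proof is correct and follows the same route as the paper: the strong law on the bounded coordinates $G(X_i)$, then pathwise continuity of $G^{-1}$ at $\mathbb{E}(G(X_1))$. Your extra remarks are accurate refinements that the paper leaves implicit: $\bar U_n\in(0,1)$, so the pullback is defined, and $G^{-1}$ is automatically continuous on $(0,1)$ because $G$ is a continuous strictly increasing bijection, which makes the stated hypothesis redundant.
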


\begin{proof}
By the law of large numbers,
\[
\frac{1}{n}\sum_{i=1}^n G(X_i) \to \mathbb{E}(G(X_1))
\quad \text{almost surely}.
\]
The conclusion follows by continuity of $G^{-1}$.
\end{proof}

\subsection{Empirical probability barycenters}

Let $X_{1},X_{2},\ldots$ be independent and identically distributed
real-valued random variables, and let $G$ be a probability coordinate
chart for $X_{1}$. Define the empirical probability barycenter by
\begin{equation}\label{eq:empirical-barycenter}
\widehat{b}_{G,n} \;=\; G^{-1}\!\left( \frac{1}{n}\sum_{i=1}^{n} G(X_{i}) \right).
\end{equation}

This quantity is the pullback, under $G^{-1}$, of the empirical
barycenter of the probability coordinates $G(X_{i})$.

\subsection{Central limit theorem for probability barycenters}
Second-order asymptotics are equally transparent in probability geometry.

\begin{assumption}[Local regularity of the coordinate chart]\label{ass:regularity}
Assume that the probability coordinate chart $G$ is differentiable at
$b_{G}(X_{1})$ with derivative $G'(b_{G}(X_{1})) \neq 0$.
\end{assumption}
The following result is a specialization, to probability coordinate charts,
of the central limit theorem for Kolmogorov-type transformations; see,
for example, \cite{deCarvalho2016}.

\begin{theorem}[Central limit theorem for probability barycenters]\label{thm:CLT}
Under Assumption~\ref{ass:regularity},
\begin{equation}\label{eq:CLT}
\sqrt{n}\left( \widehat{b}_{G,n} - b_{G}(X_{1}) \right)
\;\overset{d}{\longrightarrow}\;
\mathcal{N}\!\left( 0,\; \frac{\Var(G(X_{1}))}{(G'(b_{G}(X_{1})))^{2}} \right).
\end{equation}
\end{theorem}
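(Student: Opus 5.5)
The plan is to derive the result from the classical central limit theorem in probability coordinates, followed by the delta method applied to the inverse chart $G^{-1}$. Set $U_i=G(X_i)$, $\mu=\E(U_1)=\E(G(X_1))\in(0,1)$ and $\sigma^2=\Var(U_1)$. Since the $U_i$ are i.i.d.\ and bounded in $(0,1)$, they have finite variance, and the Lindeberg--L\'evy theorem gives
\[
\sqrt n\,(\bar U_n-\mu)\overset{d}{\longrightarrow}\mathcal N(0,\sigma^2),\qquad \bar U_n=\frac1n\sum_{i=1}^n U_i .
\]
No moment assumption on $X_1$ is needed here. The case $\sigma^2=0$ is degenerate: then $U_i=\mu$ almost surely, so $\widehat b_{G,n}=b_G(X_1)$ and the limit is the point mass at $0$.

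The second step is the delta method with $\phi=G^{-1}$ at the point $\mu$. Note that $b_G(X_1)=G^{-1}(\mu)$, so $\widehat b_{G,n}-b_G(X_1)=\phi(\bar U_n)-\phi(\mu)$. I need $\phi$ to be differentiable at $\mu$ with $\phi'(\mu)=1/G'(b_G(X_1))$. This is the one-dimensional inverse function theorem at a single point, without any $C^1$ hypothesis. Write $c=b_G(X_1)$ and $u=G(x)$. Because $G$ is a continuous strictly increasing bijection $I\to(0,1)$, its inverse is continuous, so $u\to\mu$ forces $x\to c$. Then
\[
\frac{G^{-1}(u)-G^{-1}(\mu)}{u-\mu}=\frac{x-c}{G(x)-G(c)}\longrightarrow\frac1{G'(c)},
\]
using Assumption~\ref{ass:regularity}, i.e.\ $G'(c)\neq0$. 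Since $G$ is increasing, in fact $G'(c)>0$.

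For the final assembly I would write
\[
\phi(\bar U_n)-\phi(\mu)=\bigl(\phi'(\mu)+r(\bar U_n)\bigr)(\bar U_n-\mu),
\]
where $r(u)\to0$ as $u\to\mu$ and $r(\mu)=0$. The law of large numbers in probability coordinates gives $\bar U_n\to\mu$ almost surely, hence $r(\bar U_n)\to0$ in probability. Multiplying by $\sqrt n$ and applying Slutsky's lemma yields convergence to $\mathcal N(0,\phi'(\mu)^2\sigma^2)$, which is exactly \eqref{eq:CLT}.

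The main obstacle is justifying the pointwise derivative of $G^{-1}$ from differentiability of $G$ at a single point. The ingredients that carry it are the continuity of the inverse, which follows from the chart being a strictly monotone bijection, and writing the remainder $r$ so that it is defined at $u=\mu$. The rest is routine. One should also remark that $\bar U_n\in(0,1)$, so $\widehat b_{G,n}$ is always defined.
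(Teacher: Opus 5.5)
Your proposal is correct and follows the same route as the paper: apply the classical central limit theorem to the bounded coordinates $G(X_i)$, then the delta method through $G^{-1}$. Your pointwise inverse-function argument supplies a step the paper leaves implicit. It shows that $G^{-1}$ is differentiable at $\E(G(X_1))$ with derivative $1/G'(b_G(X_1))$, using only Assumption~\ref{ass:regularity} and continuity of the inverse chart; your explicit treatment of the degenerate case $\Var(G(X_1))=0$ likewise fills in detail the paper omits.
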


\begin{proof}
By the classical central limit theorem for bounded random variables,
\[
\sqrt{n}\left( \frac{1}{n}\sum_{i=1}^{n} G(X_{i}) - \E(G(X_{1})) \right)
\;\overset{d}{\longrightarrow}\;
\mathcal{N}\!\bigl( 0,\,\Var(G(X_{1})) \bigr).
\]
Applying the delta method with the differentiable function $G^{-1}$
yields the result.
Thus, asymptotic normality of probability barycenters follows directly
from classical Gaussian fluctuations in probability coordinates via a
nonlinear change of variables.
\end{proof}

\subsection{Universality of Gaussian fluctuations}

The limiting distribution in Theorem~\ref{thm:CLT} is Gaussian
regardless of the tail behavior of $X_{1}$. Heavy-tailed phenomena
influence the limit only through the bounded random variable $G(X_{1})$.
In particular, divergence of classical moments of $X_{1}$ plays no role
in the asymptotic distribution of empirical probability barycenters.

This universality reflects a geometric fact: fluctuations are governed by
local behavior of probability mass in probability space, not by extreme
magnitudes in value space.

\subsection{Intrinsic probability geometry}

The intrinsic probability geometry yields a particularly transparent form
of the central limit theorem.

\begin{corollary}[Intrinsic CLT for the empirical probability barycenter]
Suppose that $F_X$ is continuous and strictly increasing on its chart domain,
and has a density $f$ in a neighborhood of its median $m$, with $f(m)>0$. Then
\[
\sqrt{n}\bigl(\hat b_{F_X,n} - m\bigr)
\;\xrightarrow{d}\;
\mathcal{N}\!\left(0,\frac{1}{12\,f(m)^2}\right).
\]
\end{corollary}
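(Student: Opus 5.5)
This corollary is Theorem~\ref{thm:CLT} specialised to the intrinsic chart $G=F_X$. The plan is to check that $F_X$ satisfies the hypotheses of that theorem, including Assumption~\ref{ass:regularity}, and then evaluate the two ingredients of the limiting variance: $\Var(G(X_1))$ and $G'(b_G(X_1))$.

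\textbf{Step 1: the chart and its barycenter.} Since $F_X$ is continuous and strictly increasing on its chart domain $I$, Proposition~\ref{prop:canonical-chart-uniformization} gives three facts at once:
\begin{itemize}
\item $F_X$ is an admissible probability coordinate chart;
\item $U_1=F_X(X_1)$ is uniform on $(0,1)$;
\item $b_{F_X}(X_1)=F_X^{-1}(1/2)=m$.
\end{itemize}
The empirical barycenter $\hat b_{F_X,n}$ is then exactly the estimator \eqref{eq:empirical-barycenter} with $G=F_X$. Strictly, this is an oracle quantity, since the chart is the true $F_X$ and not an estimate of it.

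\textbf{Step 2: regularity at the median.} We verify Assumption~\ref{ass:regularity} at $m$. Because $F_X$ has a density $f$ in a neighbourhood of $m$, we have $F_X(x)=F_X(m)+\int_m^x f$ there. We then want $F_X'(m)=f(m)>0$.

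This step is the only delicate point. A density is determined only almost everywhere, so differentiability of $F_X$ at $m$ with derivative $f(m)$ needs $m$ to be a Lebesgue point of $f$, or $f$ continuous at $m$. I would read the hypothesis in this standard sense, namely $f$ continuous at $m$, as is implicit in the usual median CLT.

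With that reading, the delta-method step in Theorem~\ref{thm:CLT} goes through. The inverse $F_X^{-1}$ is differentiable at $1/2$ with derivative $1/f(m)$. This follows from the inverse-function rule for strictly increasing continuous maps with nonzero derivative at the point.

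\textbf{Step 3: the variance.} For $U_1\sim\Unif(0,1)$ we have $\Var(U_1)=\int_0^1u^2\,du-\tfrac14=\tfrac13-\tfrac14=\tfrac1{12}$. Substituting $\Var(G(X_1))=1/12$ and $G'(b_G(X_1))=f(m)$ into \eqref{eq:CLT} gives the limiting variance $1/(12f(m)^2)$, which is the claimed result.

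For a self-contained argument, one can bypass Theorem~\ref{thm:CLT} and apply the CLT directly: $\sqrt n(\bar U_n-\tfrac12)\to\mathcal N(0,1/12)$. Then apply the delta method with $F_X^{-1}$ at $1/2$. This requires only the derivative of $F_X^{-1}$ at $1/2$ established in Step 2.
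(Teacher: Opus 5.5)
Your proposal is correct and matches the paper's own argument: specialize Theorem~\ref{thm:CLT} to $G=F_X$, use uniformity of $F_X(X_1)$ to get $b_{F_X}(X_1)=m$ and $\Var(F_X(X_1))=1/12$, and then apply the delta method with $F_X'(m)=f(m)$. The paper states $F_X'(m)=f(m)$ without comment, and your caveat that this needs $m$ to be a Lebesgue point of $f$ (for instance, $f$ continuous at $m$) is a fair sharpening of that step.
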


\begin{proof}
Since $F_X(X_1) \sim \mathrm{Unif}(0,1)$, we have
\[
\mathrm{Var}(F_X(X_1)) = \frac{1}{12}.
\]
Moreover, $F_X'(m) = f(m)$. The result follows from Theorem~\ref{thm:CLT} by the delta method.
\end{proof}

\begin{remark}[Relation to the sample median; oracle character of the intrinsic estimator]
The estimator $\hat b_{F_X,n}$ differs from the classical sample median.
While both target the same population quantity $m = F_X^{-1}(1/2)$,
their constructions are different: the sample median is defined through order
statistics, whereas $\hat b_{F_X,n}$ is obtained by averaging in probability
space and transporting back via $F_X^{-1}$. Their asymptotic variances also
differ: the sample median has asymptotic variance $1/(4f(m)^2)$
\cite{Serfling1980,Reiss1989}, while $\hat b_{F_X,n}$ has asymptotic variance
$1/(12f(m)^2)$.

Two caveats are essential to interpret this comparison. First,
$\hat b_{F_X,n}$ is an \emph{oracle} quantity: it requires the true
distribution function $F_X$, which is unknown precisely in the situations of
interest, so the threefold variance reduction is a theoretical statement
about the intrinsic geometry, not an attainable statistical improvement.
Second, the natural feasible version degenerates: replacing $F_X$ by the
empirical distribution function $\widehat F_n$ yields, for continuous $F_X$,
\[
\frac{1}{n}\sum_{i=1}^n \widehat F_n(X_i)=\frac{n+1}{2n}
\quad\text{deterministically},
\]
by the rank identity, so the plug-in estimator is an order statistic of rank
approximately $n/2$ and recovers the sample median together with its
classical asymptotics. The statistical content of the framework therefore
resides in benchmark charts and estimated charts rather than in the
intrinsic case; a systematic treatment, including this degeneracy, is given
in the statistical companion paper.
\end{remark}
\subsection{Non-asymptotic concentration}

Beyond asymptotic statements, boundedness of probability coordinates yields
concentration inequalities with universal constants, involving no moment
conditions on $X$ and no constants depending on the chart. The natural
formulation is in the induced metric $d_G(x,y)=|G(x)-G(y)|$ of
Section~\ref{sec:prob-coords}.

\begin{proposition}[Concentration of empirical probability barycenters]
\label{prop:concentration}
Let $X_1,X_2,\ldots$ be independent and identically distributed with
$\Prob(X_1\in I)=1$, and let $G$ be a probability coordinate chart on $I$.
Then for every $n\ge 1$ and every $t>0$,
\[
\Prob\Bigl( d_G\bigl(\widehat b_{G,n},\, b_G(X_1)\bigr) \ge t \Bigr)
\;\le\; 2\exp\bigl(-2nt^2\bigr).
\]
\end{proposition}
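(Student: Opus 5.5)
The plan is to reduce the statement to Hoeffding's inequality on the probability scale, using the fact that $G$ and $G^{-1}$ are mutually inverse bijections between $I$ and $(0,1)$.

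First, write $U_i=G(X_i)$ and $\mu=\E(G(X_1))\in(0,1)$. By Definition~\ref{def:prob-barycenter} we have $G(b_G(X_1))=\mu$. By the definition of the empirical barycenter in \eqref{eq:empirical-barycenter} we have $G(\widehat b_{G,n})=\bar U_n:=\frac1n\sum_{i=1}^n U_i$. This uses that $\bar U_n\in(0,1)$, being an average of points in $(0,1)$, so $G^{-1}(\bar U_n)$ is defined and $G\circ G^{-1}$ is the identity there. Hence, by Definition~\ref{def:induced-distance},
\[
d_G\bigl(\widehat b_{G,n},b_G(X_1)\bigr)=\bigl|G(G^{-1}(\bar U_n))-G(G^{-1}(\mu))\bigr|=|\bar U_n-\mu|.
\]
This identity holds exactly, pointwise on the sample space. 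No regularity of $G^{-1}$ is needed, which is why the bound is free of chart-dependent constants.

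Second, the $U_i$ are i.i.d. and take values in $[0,1]$. Hoeffding's inequality for bounded independent variables with range length $1$ then gives, for every $t>0$,
\[
\Prob\bigl(|\bar U_n-\mu|\ge t\bigr)\le 2\exp(-2nt^2).
\]
Substituting the identity from the first step yields the claim. Measurability of the event follows because $\widehat b_{G,n}$ is a continuous function of $\bar U_n$, as $G^{-1}$ is continuous and increasing on $(0,1)$. In any case the event coincides with $\{|\bar U_n-\mu|\ge t\}$.

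There is no genuine obstacle. The only point needing care is the exact cancellation $G\circ G^{-1}=\mathrm{id}$ on $(0,1)$, which turns a statement in value space into a statement in probability coordinates. I would also note that for $t\ge 1$ the event is empty, so the bound is trivial there.
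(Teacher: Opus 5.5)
Your proposal is correct and follows the paper's own proof: you use the exact identity $d_G\bigl(\widehat b_{G,n},b_G(X_1)\bigr)=|\bar U_n-\mu|$ and then apply Hoeffding's inequality to the i.i.d.\ $[0,1]$-valued variables $G(X_i)$. Your remarks on $G\circ G^{-1}=\mathrm{id}$ on $(0,1)$, on measurability, and on the case $t\ge 1$ add care that the paper leaves implicit, without changing the argument.
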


\begin{proof}
Since $G$ is strictly increasing on $I$,
\[
d_G\bigl(\widehat b_{G,n}, b_G(X_1)\bigr)
=\Bigl|\,\frac1n\sum_{i=1}^n G(X_i)-\E(G(X_1))\Bigr|.
\]
The variables $G(X_i)$ are independent and take values in $[0,1]$, so
Hoeffding's inequality \cite{Hoeffding1963} applies.
\end{proof}

\begin{remark}
The inequality is expressed on the probability scale, where fluctuations are
measured intrinsically; a value-space statement follows on any region where
$G^{-1}$ is Lipschitz, with the Lipschitz constant entering the rate. This is
the natural division of labour in probability geometry: universal
concentration on the probability scale, and chart-dependent distortion under
the pullback.
\end{remark}

\subsection{Geometric diagnosis of classical failure}

The preceding subsections show that classical limit theorems apply without
obstruction to the bounded coordinates $G(X)$. When a value-space law lacks
the moment assumptions required by the usual law of large numbers or central
limit theorem, that failure concerns the original sample mean, not every
functional of the observations.

Euclidean value-space distances give extreme observations unbounded leverage
in moment calculations. Probability geometry instead uses bounded coordinates
with compact closure, so coordinate averages and their fluctuations are well
defined. This change defines different functionals; it does not repair the
original moments.

For example, the sample mean of a Cauchy law remains Cauchy-distributed at
every sample size and does not converge to a finite deterministic centre.
The geometric viewpoint supplies alternative, chart-dependent functionals
for which regularity holds. Divergence of classical moments remains a property
of the law measured in value-space geometry, while probability barycenters
average in a stated bounded chart.
\subsection{Geometric interpretation of limit theorems}

\begin{remark}
The law of large numbers and central limit theorem arise as Euclidean phenomena in probability coordinates applied to the transformed variables $G(X_i)$.
\end{remark}

\begin{remark}
The convergence of
\[
G^{-1}\!\left( \frac{1}{n} \sum_{i=1}^n G(X_i) \right)
\]
can therefore be interpreted as barycentric stabilization in the geometry induced by $G$.
\end{remark}

\begin{remark}
The asymptotic variance in value space is obtained by transporting fluctuations through the inverse chart $G^{-1}$, leading to a scaling governed by $G'$.
\end{remark}
\subsection{Dictionary between value space and probability space}

\begin{center}
\small
\begin{tabular}{c c c}
\textbf{Value space} & $\longleftrightarrow$ & \textbf{Probability space} \\
\hline
Large positive/negative values & $\longleftrightarrow$ & Proximity to $1/0$ \\
Tail mismatch with benchmark $G$ & $\longleftrightarrow$ & Excess boundary occupation \\
Nonexistent value-space moments & $\longleftrightarrow$ & Finite coordinate moments \\
Value-space extremes & $\longleftrightarrow$ & Chart-relative boundary points \\
\end{tabular}
\end{center}
\begin{remark}
Boundary occupation is meaningful only relative to a stated chart and a
reference law. For the intrinsic continuous chart $G=F_X$, $G(X)$ is uniform,
so it contains no marginal tail signature.
\end{remark}
\begin{remark}
Boundedness guarantees existence of coordinate moments and averages.
Interpretation as a tail diagnostic additionally requires a benchmark and
control of location and scale, since those mismatches can also move mass
toward the boundary.
\end{remark}
\subsection{Outlook}

The results of this section establish that probability barycenters admit
laws of large numbers and central limit theorems under minimal regularity
assumptions. These results hold universally, independently of tail
behavior, and reflect geometric properties of probability space rather
than integrability in value space.

In the next section we extend this geometric viewpoint beyond centrality
and introduce Kolmogorov moments as probability-space descriptors of
dispersion and benchmark-relative boundary behavior.

\section{Kolmogorov moments in probability geometry}\label{sec:Kolmogorov-moments}

Probability barycenters describe the geometric center of probability mass
in probability space. To obtain a fuller geometric description of a
distribution, however, one must also characterize how probability mass is
arranged relative to this center. In classical probability theory, this
role is played by moments and centred moments, which quantify dispersion
through magnitude in value space. Such quantities may fail to exist for
heavy-tailed distributions precisely because they depend on unbounded
geometry.

In probability geometry, dispersion admits a natural alternative
description. By applying averaging operations to transformations of probability
coordinates, one obtains a hierarchy of Kolmogorov moments that remain well defined under minimal assumptions and
encode distributional features geometrically rather than metrically.
Equivalently, Kolmogorov moments are functionals of the pushforward
distribution of $X$ under $G$, that is, of the random variable $G(X)$
taking values in $(0,1)$.

\subsection{Kolmogorov expectation revisited}

Let $X$ be a real-valued random variable and let $G$ be a probability
coordinate chart. The Kolmogorov expectation associated with $G$, in the
population-level sense of de~Carvalho~\cite{deCarvalho2016} rather than as a
finite-sample Kolmogorov--Nagumo mean, is
\begin{equation}\label{eq:Kolmogorov-exp-recall}
b_G(X) \;=\; G^{-1}\!\bigl( \E(G(X)) \bigr),
\end{equation}
the probability barycenter of Section~\ref{sec:barycenters}.

Geometrically, this construction corresponds to averaging in probability
space and transporting the result back to value space through the inverse
coordinate map. Kolmogorov moments arise by extending this averaging
operation to nonlinear functions of probability coordinates.

\subsection{Initial Kolmogorov moments}

\begin{definition}[Initial Kolmogorov moments]\label{def:initial-moments}
Let $r \geq 1$. The initial Kolmogorov moment of order $r$ associated
with the probability coordinate chart $G$ is defined by
\begin{equation}\label{eq:initial-moment}
M_{r}^{(G)}(X) \;=\; G^{-1}\!\left( \E\bigl((G(X))^{r}\bigr) \right),
\end{equation}
whenever the inverse $G^{-1}$ is defined at $\E((G(X))^{r})$.
\end{definition}

Since $G(X)$ is bounded, the expectation $\E((G(X))^{r})$ exists for all
$r \geq 1$. Consequently, initial Kolmogorov moments are well defined for
every random variable supported on the chart domain, independently of
classical moment conditions.
\begin{remark}[Moments in probability space vs value space]
The probability-space moment
\[
\E\bigl[(G(X))^{r}\bigr]
\]
is computed entirely in chart coordinates. Applying $G^{-1}$ transports this
probability level back to value space, yielding a representative in the
original domain.
\end{remark}

\subsection{Geometric interpretation}

Initial Kolmogorov moments are moments taken in probability space, not in
value space. If $U=G(X)$, the raw moment $\E(U^r)$ increasingly emphasizes
the \emph{upper} boundary $u=1$ as $r$ grows; it does not symmetrically probe
both ends of the interval. Lower-boundary behavior is described by
$\E((1-U)^r)$, while the two-sided coordinate descriptor
\begin{equation}\label{eq:symmetric-boundary-moment}
S_{r}^{(G)}(X):=\E\!\left(U^r+(1-U)^r\right)
\end{equation}
emphasizes both boundaries. These are chart-relative descriptors: their
interpretation depends on whether $G$ is intrinsic or a stated benchmark.

From this viewpoint, Kolmogorov moments encode dispersion geometrically.
Their finiteness is a consequence of bounded probability coordinates
rather than integrability of $X$. The Kolmogorov moments admit an alternative representation in terms of a generating function, obtained by lifting the classical moment generating function to probability coordinates.
\begin{remark}[Pseudo-generating function for Kolmogorov moments]
Let $X$ be supported on the domain of a probability coordinate chart $G$.
Define
\[
\varphi_G(t) := \mathbb{E}\big(e^{t G(X)}\big).
\]
Because $G(X)$ is bounded, $\varphi_G$ is finite for all $t\in\R$ and
\[
G^{-1}\big(\varphi_G'(0)\big) = b_G(X),
\]
and more generally,
\[
G^{-1}\big(\varphi_G^{(k)}(0)\big)=M_k^{(G)}(X)
\]
for each integer $k\ge1$.

Thus, Kolmogorov moments can be interpreted as classical moments computed in probability coordinates and subsequently pulled back to value space.

This observation shows that the present framework admits a generating-function viewpoint analogous to the classical moment generating function, but expressed in probability coordinates.
\end{remark}

\subsection{Intrinsic probability geometry}

When the probability coordinate chart is chosen intrinsically, that is
$G = F_{X}$ and $X$ has a continuous distribution, the probability
integral transform yields
\[
F_{X}(X) \sim \Unif(0,1).
\]
In this case,
\begin{equation}\label{eq:intrinsic-moments}
\E\bigl((F_{X}(X))^{r}\bigr) = \frac{1}{r+1},
\end{equation}
and intrinsic Kolmogorov moments reduce to quantile-type functionals
determined entirely by the uniform geometry of probability space.

\subsection{Centred Kolmogorov moments}

Dispersion relative to the coordinate barycenter is measured on the
probability scale.

\begin{definition}[Centred Kolmogorov moments]\label{def:centred-moments}
Let $r \geq 1$ and let $\bar u = \E(G(X))$. The centred Kolmogorov moment
of order $r$ associated with $G$ is defined by
\begin{equation}\label{eq:centred-moment}
\widetilde{M}_{r}^{(G)}(X)
\;=\; \E\bigl(\,(G(X) - \bar u)^{r}\,\bigr).
\end{equation}
Absolute centred Kolmogorov moments are defined analogously, with
$\lvert G(X)-\bar u\rvert^{r}$ in place of $(G(X)-\bar u)^{r}$; these are
useful in situations where symmetry is not assumed.
\end{definition}

These quantities measure dispersion in probability space relative to the
coordinate barycenter $\bar u$, in direct analogy with centred classical
moments. Unlike initial Kolmogorov moments, centred Kolmogorov moments are
deliberately \emph{not} pulled back to value space.

\begin{remark}[Why centred moments are not pulled back]\label{rem:no-pullback}
Two obstructions make a pullback of centred moments through $G^{-1}$
inappropriate. The first is a domain obstruction: centred moments of odd
order may vanish or be negative, while $G^{-1}$ is defined only on $(0,1)$;
for any law symmetric on the probability scale, the odd centred moments are
zero and a pullback would simply not exist. The second is semantic:
$G^{-1}$ interprets its argument as a \emph{probability level}, whereas a
centred moment is a \emph{dispersion}. In the intrinsic case, for instance,
$\Var(F_X(X))=1/12$, and $F_X^{-1}(1/12)$ is a low quantile of $X$---a
location, not a measure of spread. Centred Kolmogorov moments are therefore
reported on the probability scale, where they are dimensionally meaningful
and universally finite, with $0\le \widetilde M_2^{(G)}(X)\le 1/4$. Initial
Kolmogorov moments, by contrast, admit a meaningful pullback because
$\E((G(X))^r)\in(0,1)$ is itself a probability level.
\end{remark}

\subsection{Kolmogorov variance and dispersion intervals}

The centred Kolmogorov moment of order two defines the Kolmogorov
variance
\begin{equation}\label{eq:Kolmogorov-variance}
V_{G}(X) \;=\; \Var\bigl(G(X)\bigr) \;=\; \widetilde M_{2}^{(G)}(X)
\;\in\;\Bigl[0,\tfrac14\Bigr],
\end{equation}
a probability-scale measure of dispersion that is finite for every random
variable.

A value-space notion of dispersion consistent with the affine structure of
the probability scale (Proposition~\ref{prop:affine-invariance}) is obtained
by transporting an \emph{interval} rather than a number. With
$s=\sqrt{V_G(X)}$, define the \emph{Kolmogorov dispersion interval}
\begin{equation}\label{eq:dispersion-interval}
D_G(X) \;=\; \bigl[\,G^{-1}(\bar u - s),\; G^{-1}(\bar u + s)\,\bigr],
\end{equation}
whenever $[\bar u - s,\ \bar u + s]\subset(0,1)$. The interval $D_G(X)$
contains the probability barycenter $b_G(X)$ and describes the region of
value space corresponding to one standard deviation on the probability
scale; its asymmetry around $b_G(X)$ reflects the local distortion of the
inverse chart.
\begin{remark}[Universality]
Since $G(X)$ takes values in a bounded domain, all Kolmogorov moments
exist for every random variable supported on the chart domain. In particular, divergence of
classical moments of $X$ does not affect the existence of these
quantities, which are entirely determined by probability geometry.
\end{remark}
\subsection{Moment determinacy in probability coordinates}

Classical moment problems on unbounded domains may be indeterminate:
distinct heavy-tailed laws can share all finite moments, the lognormal
family being the standard example. In probability coordinates the situation
is the opposite.

\begin{theorem}[Kolmogorov moments determine the law]\label{thm:determinacy}
Let $G$ be a probability coordinate chart on $I$ and let $X$ and $Y$ be
$I$-valued random variables. If
\[
\E\bigl((G(X))^{r}\bigr) \;=\; \E\bigl((G(Y))^{r}\bigr)
\qquad\text{for all } r\ge 1,
\]
then $X$ and $Y$ have the same law. Equivalently, the sequence of initial
Kolmogorov moments $\{M_{r}^{(G)}(X)\}_{r\ge1}$ determines the law of $X$
on $I$.
\end{theorem}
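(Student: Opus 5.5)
The plan is to reduce the statement to the classical Hausdorff moment problem on a compact interval, and then transport uniqueness back to value space through the inverse chart. Set $U=G(X)$ and $V=G(Y)$. Since $\Prob(X\in I)=\Prob(Y\in I)=1$ and $G:I\to(0,1)$, both $U$ and $V$ take values in $(0,1)\subset[0,1]$. The hypothesis then says that $\E(U^r)=\E(V^r)$ for all integers $r\ge1$. The case $r=0$ holds trivially, since both sides equal $1$. By linearity, $\E(p(U))=\E(p(V))$ for every polynomial $p$.

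The first key step is to pass from polynomials to all continuous functions on $[0,1]$. By the Weierstrass approximation theorem, every $\varphi\in C([0,1])$ is a uniform limit of polynomials $p_k$. Because $U$ and $V$ are bounded, $|\E(\varphi(U))-\E(p_k(U))|\le\|\varphi-p_k\|_\infty$, and the same bound holds for $V$. Hence $\E(\varphi(U))=\E(\varphi(V))$ for all continuous $\varphi$.

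Next I would identify the laws. Two Borel probability measures on $[0,1]$ that integrate every continuous function equally coincide. To see this, approximate $\mathbf 1_{(-\infty,t]}$ by piecewise-linear continuous functions decreasing pointwise and use dominated convergence; this gives equal distribution functions. Alternatively one may simply invoke the Riesz representation theorem. Either way, $U\overset{d}{=}V$. This is the one genuinely analytic step. It is standard, but it is exactly where compactness of the closure of the probability scale is used, and it is what fails for classical moments on unbounded domains such as the lognormal example.

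The final step transports the equality back to value space. $G$ is a continuous strictly increasing bijection $I\to(0,1)$, so $G^{-1}$ is continuous, hence Borel, and $X=G^{-1}(U)$, $Y=G^{-1}(V)$ almost surely. Therefore $P_X=(G^{-1})_\#P_U=(G^{-1})_\#P_V=P_Y$. Concretely, for $x\in I$,
\[
\Prob(X\le x)=\Prob(U\le G(x))=\Prob(V\le G(x))=\Prob(Y\le x),
\]
and both variables are $I$-valued, so their distribution functions agree everywhere.

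For the ``equivalently'' clause, note that $G^{-1}$ is injective on $(0,1)$ and each $\E(U^r)$ lies in $(0,1)$ because $U\in(0,1)$ almost surely. Thus the sequence $M_r^{(G)}(X)$ determines the sequence $\E(U^r)$ by applying $G$, and the first part then applies. I expect no real obstacle beyond stating the Weierstrass/Riesz uniqueness step cleanly.
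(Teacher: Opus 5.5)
Your proof is correct, and its overall structure matches the paper's. You pass to $U=G(X)$ and $V=G(Y)$ on $[0,1]$, show $U\overset{d}{=}V$, pull back through $\Prob(X\le x)=\Prob(G(X)\le G(x))$, and use injectivity of $G^{-1}$ for the ``equivalently'' clause.

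The genuine difference is the uniqueness lemma on $[0,1]$.

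\begin{itemize}
\item The paper invokes the moment generating function criterion. Because $0\le U\le 1$, one has $\E(e^{tU})=\sum_{k\ge 0}t^k\E(U^k)/k!$ for every $t$; this termwise-integration step is left implicit in the paper. So $U$ and $V$ share an everywhere-finite moment generating function, and hence a law.
\item You instead combine Weierstrass density of polynomials in $C([0,1])$ with the fact that continuous functions determine a Borel probability measure. This is the classical Hausdorff-moment uniqueness argument.
\end{itemize}

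Each route has its advantages. Yours is elementary and self-contained, with no appeal to the analyticity and characteristic-function machinery behind uniqueness of moment generating functions. The paper's is a one-line citation. It ties in directly with the pseudo-generating function $\varphi_G(t)=\E(e^{tG(X)})$ introduced earlier in the same section. It would also survive beyond compact support, for any law whose moment generating function is finite near $0$.

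One small correction to your commentary concerns where compactness does its work. It is used in the Weierstrass step, not in the identification step. The identification step does not need it, since bounded continuous functions determine Borel probability measures on $\R$ equally well. What breaks down for the lognormal is that polynomial moments no longer pin down the integrals of all bounded continuous functions.
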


\begin{proof}
The variables $G(X)$ and $G(Y)$ take values in $[0,1]$. A probability
distribution supported on a bounded interval is uniquely determined by its
moment sequence: the moment generating function is finite everywhere, and
distributions with a common, everywhere-finite moment generating function
coincide (see, e.g., \cite{Billingsley1995}). Hence $G(X)$ and $G(Y)$ are
equal in law. Since $G:I\to(0,1)$ is a continuous, strictly increasing
bijection, for every $x\in I$,
\[
\Prob(X\le x)=\Prob\bigl(G(X)\le G(x)\bigr)
=\Prob\bigl(G(Y)\le G(x)\bigr)=\Prob(Y\le x),
\]
so $X$ and $Y$ have the same law. The equivalence with the pulled-back
moments follows since $G^{-1}$ is injective, so the sequences
$\{M_r^{(G)}\}$ and $\{\E((G(X))^r)\}$ determine one another.
\end{proof}

\begin{remark}[Determinacy restored by the geometry]
For heavy-tailed laws, classical moments may fail to exist, and even when
they all exist they may fail to characterize the distribution.
Theorem~\ref{thm:determinacy} shows that both pathologies disappear in
probability coordinates: every law on $I$ possesses all Kolmogorov moments,
and these moments identify it uniquely. Determinacy is thus a structural
consequence of the compact closure $[0,1]$ of the coordinate space.
Boundedness itself guarantees existence of probability barycenters and
supports the concentration inequality of
Proposition~\ref{prop:concentration}.
\end{remark}

\subsection{Summary}

Kolmogorov moments provide probability-geometric descriptors of
dispersion that remain well defined for every law supported on the chart
domain, and, by
Theorem~\ref{thm:determinacy}, characterize the underlying law completely.
They complement probability barycenters by capturing how probability mass is
arranged relative to its geometric center, independently of classical
moment assumptions.

\section{Tail mismatch and boundary geometry}\label{sec:heavy-tails}

Heavy-tailedness is a property of a law in value space, conventionally
defined through tail decay, regular variation, or moment conditions; see
\cite{Resnick2007,Embrechts2003}. A probability chart does not change that
property. What it supplies is a bounded, chart-relative representation in
which mismatch with a benchmark can be measured near the boundary.

\subsection{Probability coordinates and tail events}

Let $X$ have law $F$ supported on the domain $I$ of a probability coordinate
chart $G$, and write $U=G(X)$. Since $G$ is increasing, observations far in
the upper or lower part of $I$ map near $1$ or $0$, respectively. For
$0<\varepsilon<1/2$, define the two-sided boundary occupation
\begin{equation}\label{eq:boundary-occupation}
B_{G,\varepsilon}(F)
:=\Prob_F\!\left(G(X)<\varepsilon\ \text{or}\ G(X)>1-\varepsilon\right).
\end{equation}
This quantity is always finite, but it is not an absolute measure of
heavy-tailedness: it depends on both $F$ and the chart $G$.

\subsection{Benchmark-relative boundary excess}

Let $F_0$ be a reference law on the same chart domain. The excess boundary
occupation of $F$ relative to $F_0$ is
\begin{equation}\label{eq:excess-boundary-occupation}
\Delta_{G,\varepsilon}(F;F_0)
:=B_{G,\varepsilon}(F)-B_{G,\varepsilon}(F_0).
\end{equation}
When $F_0$ is continuous and $G=F_0$, the probability integral transform
gives $B_{G,\varepsilon}(F_0)=2\varepsilon$. Positive excess then records
more mass near the chart boundaries than the benchmark assigns. It may be
caused by tail, location, or scale mismatch; consequently, a tail
interpretation requires location and scale to be controlled or separately
calibrated.

\subsection{One-sided and two-sided coordinate moments}

The raw moment $\E(U^r)$ emphasizes the upper boundary only. Its reflected
counterpart $\E((1-U)^r)$ emphasizes the lower boundary, and the symmetric
quantity $S_r^{(G)}(X)$ in \eqref{eq:symmetric-boundary-moment} emphasizes
both. Together with $B_{G,\varepsilon}(F)$, these bounded quantities describe
the pushforward law near the chart boundaries. They are geometric
descriptors of benchmark mismatch, not replacements for classical
definitions of tail decay.

\subsection{Intrinsic probability geometry}

If $F$ is continuous and the chart is intrinsic, $G=F$, then $U=F(X)$ is
uniform and
\[
B_{F,\varepsilon}(F)=2\varepsilon.
\]
Thus intrinsic marginal coordinates contain no signature that distinguishes
a heavy-tailed marginal from a light-tailed one. Marginal tail comparisons
require an external benchmark (or comparison between specified charts).
Heavy-tailedness itself remains an absolute property of the value-space law;
only its geometric representation is chart-relative.

\subsection{Geometric diagnosis of classical divergence}

Bounded probability coordinates guarantee the existence of coordinate
averages and coordinate moments even when corresponding value-space moments
do not exist. This is a change of functional and geometry, not a removal of
the original divergence. Pulling results back to value space may amplify
coordinate perturbations when $G^{-1}$ is steep, which is why the limit
theorems and stability statements above include local regularity conditions.

\subsection{Summary}

Boundary occupation provides a stable and explicit description of where a
pushforward law lies relative to a stated benchmark. Excess boundary
occupation can diagnose tail mismatch after location and scale have been
controlled, while intrinsic continuous coordinates are uniform and carry no
marginal tail signature.

\section{Multivariate probability geometry and copula coordinates}\label{sec:multivariate}

The geometric framework developed so far extends naturally to
multivariate random variables by working in probability coordinates
componentwise and encoding dependence through copulas. This extension
preserves the geometric interpretation of probability barycenters and
Kolmogorov moments while separating marginal geometry from dependence
geometry.

\subsection{Multivariate probability coordinates}

Let $X = (X_1, \ldots, X_d)$ be a random vector, and let 
$G = (G_1, \ldots, G_d)$ be a collection of probability coordinate charts.
Define the multivariate probability coordinates
\begin{equation}\label{eq:multivariate-coords}
U_i := G_i(X_i), \qquad i=1,\ldots,d.
\end{equation}
The random vector $U=(U_1,\ldots,U_d)$ takes values in $(0,1)^d$, whose
closure is the unit cube $[0,1]^d$. Its joint distribution defines a
probability measure on that cube. If all marginal CDFs are continuous and
qualify as charts on their respective domains, then in the intrinsic case
$G_i=F_{X_i}$ this distribution is the unique copula of $X$ by Sklar's
theorem; see \cite{Joe2014}.
In this representation, marginal behavior is encoded in the coordinate
maps $F_{X_{i}}$, while dependence structure is encoded entirely by the
copula.

\subsection{Probability geometry in \texorpdfstring{$[0,1]^{d}$}{[0,1]\^d}}

The geometry of multivariate probability space differs fundamentally from
that of value space. Regions near the corners of $[0,1]^{d}$ correspond
to joint tail events, while faces and edges correspond to partial tail
behavior.

Relative to a benchmark chart, joint tail mismatch or tail dependence can
therefore appear as excess occupation near the boundary of the unit cube.
For intrinsic continuous coordinates, each margin is uniform, so corner
behavior describes extremal dependence rather than marginal heaviness.

\subsection{Multivariate probability barycenters}

Let $I=I_1\times\cdots\times I_d$ and suppose $\Prob(X\in I)=1$. For each
$j$, let $G_j:I_j\to(0,1)$ be a probability coordinate chart, and define
$G:I\to(0,1)^d$ componentwise by
$G(x)=(G_1(x_1),\ldots,G_d(x_d))$.

The multivariate probability barycenter associated with $G$ is defined by
\begin{equation}\label{eq:multivariate-barycenter}
b_G(X) \;:=\; G^{-1}\!\bigl( \E(G(X)) \bigr),
\end{equation}
where $G^{-1}$ is also understood componentwise.

This definition generalizes the univariate probability barycenter and
coincides with it when $d=1$. Each component of $\E(G(X))$ lies in $(0,1)$,
so the inverse is defined. This componentwise restriction avoids a problem
with a general injective map into a nonconvex image: its coordinate mean
need not remain in that image.

Geometrically, $b_G(X)$ is the center of mass of the pushforward
distribution of $X$ in probability space, transported back to value space
through the inverse coordinate chart.

\begin{remark}[Componentwise character of the multivariate barycenter]
\label{rem:componentwise}
For componentwise charts $G=(G_1,\ldots,G_d)$, the barycenter factorizes:
\[
b_G(X)
=\bigl(G_1^{-1}(\E(G_1(X_1))),\ldots,G_d^{-1}(\E(G_d(X_d)))\bigr),
\]
that is, $b_G(X)$ is the vector of marginal probability barycenters; in the
intrinsic case $G_i=F_{X_i}$ it is the vector of marginal medians. The
dependence structure of $X$---equivalently, the copula of the coordinate
vector $U$---does not enter the barycenter itself. The copula
representation should therefore be understood as the geometric stage on
which genuinely joint functionals (corner mass, boundary occupation of
$[0,1]^d$, barycenters with respect to non-product metrics) can be
formulated; the systematic development of such joint descriptors is
deferred to future work.
\end{remark}

\begin{figure}[H]
\centering
\resizebox{0.92\textwidth}{!}{%
\begin{tikzpicture}[
    >=Latex,
    every node/.style={font=\small},
    point/.style={circle, fill=black, inner sep=1.2pt},
    bpoint/.style={circle, fill=purple!80!black, inner sep=2pt},
    map/.style={->, thick},
    axis/.style={->, thick}
]

\node[font=\bfseries\large] at (0,4.0) {Value space};
\node at (0,3.6) {$\mathbb{R}^3$};

\draw[axis] (0,0) -- (2.5,0) node[right] {$x_1$};
\draw[axis] (0,0) -- (0,2.3) node[above] {$x_2$};
\draw[axis] (0,0) -- (-1.6,-1.4) node[below left] {$x_3$};

\foreach \x/\y in {
0.25/0.20,0.55/0.35,0.85/0.55,1.10/0.75,1.35/0.95,
0.10/0.80,0.45/1.05,0.75/1.25,1.05/1.45,
-0.25/-0.30,-0.55/-0.55,-0.85/-0.80,
0.95/-0.15,1.35/-0.25,1.75/-0.35}
{
\node[point] at (\x,\y) {};
}

\node[bpoint] (EG) at (0.65,0.55) {};
\node[above right, purple!80!black] at (0.65,0.12)
{$b_G(X)$};

\node[align=center] at (0,-2.35)
{Random vector\\
$X=(X_1,X_2,X_3)$};

\node[font=\bfseries\large] at (7.2,4.0) {Probability coordinates};
\node at (7.2,3.6) {$[0,1]^3$};

\coordinate (A) at (5.4,-1.0);
\coordinate (B) at (8.6,-1.0);
\coordinate (C) at (8.6,1.7);
\coordinate (D) at (5.4,1.7);

\coordinate (shift) at (0.85,0.65);

\coordinate (A2) at ($(A)+(shift)$);
\coordinate (B2) at ($(B)+(shift)$);
\coordinate (C2) at ($(C)+(shift)$);
\coordinate (D2) at ($(D)+(shift)$);

\draw[thick] (A) -- (B) -- (C) -- (D) -- cycle;
\draw[thick] (A2) -- (B2) -- (C2) -- (D2) -- cycle;
\draw[thick] (A) -- (A2);
\draw[thick] (B) -- (B2);
\draw[thick] (C) -- (C2);
\draw[thick] (D) -- (D2);

\node[below] at (7.0,-1.25) {$u_1=G_1(X_1)$};
\node[rotate=90] at (5.05,0.35) {$u_2=G_2(X_2)$};
\node at (7.8,-0.15) {$u_3=G_3(X_3)$};

\node[below left] at (A) {$0$};
\node[below right] at (B) {$1$};
\node[above left, yshift=-2pt] at (D) {$1$};
\node[above right] at (B2) {$1$};

\foreach \x/\y in {
5.75/-0.55,6.05/-0.25,6.35/0.05,6.65/0.25,
6.95/0.55,7.25/0.75,7.55/0.95,7.85/1.10,
6.20/0.65,6.55/0.95,7.8/2.25,
7.40/0.10,7.75/0.65,8.10/0.70}
{
\node[point] at (\x,\y) {};
}

\node[bpoint] (ubar) at (6.8,0.35) {};
\node[above right, purple!80!black] at (6.9,0.12)
{$\bar u=\mathbb{E}(G(X))$};

\node[align=center] at (7.2,-2.35)
{Averaging is performed\\
inside the unit cube.};

\draw[map] (1.4,1.25) .. controls (3.8,2.25) and (4.65,2.25) ..
node[above, xshift=-0.35cm] {$G=(G_1,G_2,G_3)$} (7.35,1.55);

\draw[map] (ubar) .. controls (4.6,-3.05) and (2.2,-2.45) ..
node[pos=0.5, below] {$G^{-1}$}
(EG);
\end{tikzpicture}
}

\caption{Multivariate probability coordinates. A random vector \(X=(X_1,X_2,X_3)\) is mapped componentwise into \((0,1)^3\), whose closure is the unit cube, through probability coordinate charts \(G_1,G_2,G_3\). Averaging produces the coordinate barycenter \(\bar u=\mathbb{E}(G(X))\), which is then pulled back componentwise to value space.}
\label{fig:multivariate-probability-coordinates}
\end{figure}

\subsection{Intrinsic multivariate geometry}

When all marginal distribution functions are continuous and strictly
increasing on their respective chart domains, and $G$ is chosen as their
vector,
\begin{equation}\label{eq:intrinsic-multivariate}
G(x) = \bigl(F_{X_{1}}(x_{1}),\ldots,F_{X_{d}}(x_{d})\bigr),
\end{equation}
the probability coordinates $U$ have copula distribution $C$.

In this intrinsic geometry, the continuous marginals are uniformized and
probability geometry separates the marginal coordinate maps from dependence
structure, with the latter encoded by the copula. In view of
Remark~\ref{rem:componentwise}, the intrinsic multivariate barycenter
reduces to the vector of marginal medians: dependence enters not through
the barycenter itself, but through the geometry of the coordinate
distribution on $[0,1]^d$, which is the natural object for joint
descriptors.
\begin{remark}[Boundary geometry in higher dimensions]
In the multivariate setting, boundary regions of $[0,1]^d$ correspond to
joint tail events relative to the component charts. In intrinsic continuous
coordinates, concentration near corners reflects extremal dependence, while
faces or edges describe partial extremal events; it does not by itself
identify heavy marginal tails. Under benchmark charts, excess corner or face
occupation may also record marginal location, scale, or tail mismatch.
\end{remark}
\subsection{Summary}

The multivariate extension of probability geometry separates marginal
structure from dependence structure and represents both geometrically in
probability space. The multivariate barycenter itself is componentwise, a
vector of marginal barycenters; the geometry of the coordinate distribution
on $[0,1]^d$, with dependence encoded by the copula, provides the setting
in which genuinely joint descriptors of tail behavior can be developed.

This completes the probabilistic development of probability geometry in
both univariate and multivariate settings. Statistical constructions and
inferential methodologies based on these ideas are developed separately in
a dedicated companion paper.
\section{Discussion and outlook}\label{sec:discussion}

This paper develops a geometric framework for probability based on
probability coordinate charts and probability barycenters. By
interpreting cumulative distribution functions as coordinate maps and
working in probability coordinates rather than value space, expectation,
centrality, and dispersion are reinterpreted as geometric objects. In
this perspective, probability itself acquires a geometry, and expectation
becomes a barycentric operation in probability space.

The foundational distinction established in this paper is that a probability
chart is not merely a passive coordinate description of a pre-existing
geometry. In ordinary coordinate invariance, the metric tensor is transformed
along with the coordinate and geometric quantities remain unchanged. Here each
chart is assigned the Euclidean metric on $(0,1)$ and pulls it back to value
space. Replacing the chart while holding the observations and their law fixed
therefore replaces the metric-measure structure $(I,d_G,P_X)$. This explains,
in a single principle, why order-based summaries remain stable while metric
notions of centrality, dispersion, and boundary proximity are chart-sensitive.
It also clarifies that the variation of $b_G(X)$ across charts is the intended
geometric content of the framework, not an artefact to be eliminated.

A central insight of the framework is that failure of classical moments does
not preclude well-defined averages of bounded transforms. Classical
expectation averages in value space endowed with Euclidean geometry, where
extreme observations can have unbounded leverage. Probability charts instead
define different, bounded coordinate functionals. They do not make a
nonexistent classical moment exist; they provide a chart-indexed barycentric
structure whose assumptions and interpretation are explicit.

Within this geometric setting, probability barycenters exist for every law
supported on the domain of an admissible chart, independently of integrability
or moment assumptions on $X$. Laws of large numbers and central limit theorems hold in probability coordinates under the stated regularity assumptions, due to boundedness of the transformed variables, and Gaussian
fluctuations arise as a geometric consequence of averaging in probability
space. The intrinsic probability geometry associated with a distribution
yields the median as a canonical notion of centrality, while alternative
probability coordinate charts induce benchmark geometries relative to
which centrality is measured. These results are consistent with classical limit theorems for
Kolmogorov-type transformations, and here acquire a natural geometric
interpretation in probability space.

Kolmogorov moments further enrich this geometric picture by providing
chart-dependent probability-space descriptors of dispersion. Initial and
centred Kolmogorov moments remain finite for every law supported on the chart
domain. Raw powers emphasize one boundary, whereas reflected or symmetric
versions probe both. With a benchmark and a reference law, excess boundary
occupation describes tail mismatch after location and scale are controlled;
in the intrinsic continuous chart the coordinates are uniform and carry no
marginal tail signature.

For continuous marginals, the multivariate extension via intrinsic copula
coordinates separates marginal transforms from dependence structure.
Extremal dependence is represented by corner and face behavior of the copula,
whereas benchmark-coordinate boundary occupation can also reflect marginal
location, scale, or tail mismatch.

The framework developed here is deliberately probabilistic and
foundational. Detailed estimators and inferential procedures are reserved for
companion work. The paper instead identifies
geometric structures underlying probability that are universal across
distributions and independent of classical moment assumptions. This
separation clarifies the role of probability geometry as a foundational
layer upon which statistical methodology can be built. One structural
consequence deserves mention here. For a fixed differentiable benchmark
chart, if $b=b_G(F)$ and $0<|G'(b)|<\infty$, contamination differentiation
gives
\[
\operatorname{IF}(x;b_G,F)
=\frac{G(x)-\E_F[G(X)]}{G'(b)}.
\]
The numerator is bounded, so the influence function is bounded under these
local inverse-regularity conditions. Boundedness of the chart alone is not
sufficient if the derivative vanishes or the inverse is not differentiable.
This connection to robust statistics \cite{Huber2009}, together with feasible
estimation theory for benchmark and estimated charts, is developed in the
statistical companion paper.

The geometric interpretation developed here is related in spirit to recent work on quasi-arithmetic averages in information geometry \cite{Nielsen2023QAC}, where generalized averages arise from dual affine coordinate systems induced by convex potentials. However, the present framework differs fundamentally in that the geometry is not generated by arbitrary convex coordinates, but by probability-coordinate charts associated with cumulative distribution functions. This restriction preserves probabilistic interpretability and gives rise to a canonical intrinsic geometry through the probability integral transform. 

The present framework is also related, at a conceptual level, to optimal transport theory. Indeed, for a continuous random variable \(X\), the probability integral transform
\[
U = F_X(X)
\]
canonically transports the law of \(X\) onto the uniform distribution on \((0,1)\). In one dimension, this map coincides with the monotone transport associated with quantile representations and Wasserstein geometry; see, for example, \cite{Villani2009,Panaretos2019}. However, the goal of the present work is fundamentally different. Rather than studying transport costs or Wasserstein geodesics, the emphasis here is on the geometric role of probability-coordinate charts and the associated probability barycenters induced by averaging in transformed probability coordinates.

Several directions for further investigation follow naturally. The
moment-determinacy theorem (Theorem~\ref{thm:determinacy}) is a first
instance of the interaction between probability geometry and classical
moment problems on bounded domains; finer Hausdorff-type representations
and quantitative identification results remain to be explored. Boundary
geometry in probability space offers a natural
language for studying extremal dependence and rare-event phenomena. More broadly, probability geometry provides a unifying viewpoint for
understanding expectation, dispersion, and limit behavior across
distributions, including those with heavy tails. Its defining move is to make
the probabilistic ruler explicit: charts preserve the order of the data, but
their induced metrics determine what counts as distance, balance, and
extremeness. The geometry is therefore chosen rather than hidden.


\end{document}